\def\NZQ{\mathbb}               
\def\ZZ{{\NZQ Z}}
\def\RR{{\NZQ R}}
\def\CC{{\NZQ C}}
\def\frk{\mathfrak}               
\def\Phi{{\frk N}}
\def\opn#1#2{\def#1{\operatorname{#2}}} 
\opn\chara{char} 
\opn\length{\ell} 
\opn\pd{pd} 
\opn\rk{rk}
\opn\projdim{proj\,dim} 
\opn\injdim{inj\,dim} 
\opn\rank{rank}
\opn\depth{depth} 
\opn\grade{grade} 
\opn\height{height}
\opn\embdim{emb\,dim} 
\opn\codim{codim}
\opn\Tr{Tr} 
\opn\bigrank{big\,rank}
\opn\superheight{superheight}
\opn\lcm{lcm}
\opn\trdeg{tr\,deg}
\opn\reg{reg} 
\opn\lreg{lreg} 
\opn\ini{in} 
\opn\lpd{lpd}
\opn\size{size}
\opn\mult{mult}
\opn\dist{dist}
\opn\cone{cone}
\opn\lex{lex}
\opn\rev{rev}
\opn\div{div} \opn\Div{Div} \opn\cl{cl} \opn\Cl{Cl}
\opn\Spec{Spec} \opn\Supp{Supp} \opn\supp{supp} \opn\Sing{Sing}
\opn\Ass{Ass} \opn\Min{Min}
\opn\Ann{Ann} \opn\Rad{Rad} \opn\Soc{Soc}
\opn\Syz{Syz} \opn\Im{Im} \opn\Ker{Ker} \opn\Coker{Coker}
\opn\Am{Am} \opn\Hom{Hom} \opn\Tor{Tor} \opn\Ext{Ext}
\opn\End{End} \opn\Aut{Aut} \opn\id{id} \opn\ini{in}
\opn\nat{nat}
\opn\pff{pf}
\opn\Pf{Pf} \opn\GL{GL} \opn\SL{SL} \opn\mod{mod} \opn\ord{ord}
\opn\Gin{Gin}
\opn\Hilb{Hilb}\opn\adeg{adeg}\opn\std{std}\opn\ip{infpt}
\opn\Pol{Pol}
\opn\sat{sat}
\opn\Var{Var}
\opn\Gen{Gen}
\opn\aff{aff} \opn\con{conv} \opn\relint{relint} \opn\st{st}
\opn\lk{lk} \opn\cn{cn} \opn\core{core} \opn\vol{vol}
\opn\link{link} \opn\star{star} \opn\Box{Box}
\opn\gr{gr}
\def\Hc{{\mathcal H}}
\def\Fc{{\mathcal F}}
\def\Pc{{\mathcal P}}
\def\Qc{{\mathcal Q}}
\def\pot#1#2{#1[\kern-0.28ex[#2]\kern-0.28ex]}
\opn\dirlim{\underrightarrow{\lim}}
\opn\inivlim{\underleftarrow{\lim}}
\def\Implies{\ifmmode\Longrightarrow \else
	\unskip${}\Longrightarrow{}$\ignorespaces\fi}
\def\implies{\ifmmode\Rightarrow \else
	\unskip${}\Rightarrow{}$\ignorespaces\fi}
\def\iff{\ifmmode\Longleftrightarrow \else
	\unskip${}\Longleftrightarrow{}$\ignorespaces\fi}
\newtheorem{Theorem}{Theorem}[section]
\newtheorem{Lemma}[Theorem]{Lemma}
\newtheorem{Corollary}[Theorem]{Corollary}
\newtheorem{Remark}[Theorem]{Remark}
\newtheorem*{acknowledgement}{Acknowledgment}
\let\epsilon\varepsilon
\let\phi=\varphi
\let\kappa=\varkappa
\def\qed{\ifhmode\textqed\fi
	\ifmmode\ifinner\quad\qedsymbol\else\dispqed\fi\fi}
\def\textqed{\unskip\nobreak\penalty50
	\hskip2em\hbox{}\nobreak\hfil\qedsymbol
	\parfillskip=0pt \finalhyphendemerits=0}
\def\dispqed{\rlap{\qquad\qedsymbol}}
\opn\dis{dis}
\opn\height{height}
\opn\dist{dist}
\def\pnt{{\raise0.5mm\hbox{\large\bf.}}}
\opn\Lex{Lex}
\begin{document}
	\title{Gorenstein simplices and the associated finite abelian groups}
	\author{Akiyoshi Tsuchiya}
	\address{Akiyoshi Tsuchiya,
		Department of Pure and Applied Mathematics,
		Graduate School of Information Science and Technology,
		Osaka University, Suita, Osaka 565-0871, Japan}
	\email{a-tsuchiya@ist.osaka-u.ac.jp}
	\subjclass[2010]{52B05, 52B20}
	\keywords{Gorenstein polytope, reflexive polytope, dual polytope, finite abelian group}
\begin{abstract}
	It is known that a lattice simplex of dimension $d$ corresponds a finite abelian subgroup of $(\mathbb{R}/\mathbb{Z})^{d+1}$.
	Conversely, given a finite abelian subgroup of $(\mathbb{R}/\mathbb{Z})^{d+1}$ such that the sum of all entries of each element is 
	an integer, we can obtain a lattice simplex of dimension $d$.
	In this paper, we discuss a characterization of Gorenstein simplices in terms of the associated finite abelian groups.
	In particular, we present complete characterizations of Gorenstein simplices whose normalized volume equals $p,p^2$ and $pq$,
	where $p$ and $q$ are prime numbers with $p \neq q$.
	Moreover, we compute the volume of the associated dual reflexive simplices of the Gorenstein simplices.  
\end{abstract}
\maketitle
	\section*{Introduction}
	A \textit{lattice polytope} is a convex polytope each of whose vertices has integer coordinates.	
		For a lattice simplex $\Delta \subset \RR^d$ of dimension $d$ whose vertices are $v_0,\ldots,v_d \in \ZZ^d$
		set 
		$$\Lambda_\Delta=\{(\lambda_0,\ldots,\lambda_d) \in (\RR/\ZZ)^{d+1} : \sum\limits_{i=0}^{d}\lambda_i(v_i,1) \in \ZZ^{d+1}   \}.$$
		The collection $\Lambda_\Delta$ forms a finite abelian group with addition defined as follows: 
		For $(\lambda_0,\ldots,\lambda_d) \in (\RR/\ZZ)^{d+1}$ and $(\lambda_0',\ldots,\lambda_d') \in (\RR/\ZZ)^{d+1}$,  $(\lambda_0,\ldots,\lambda_d)+(\lambda_0',\ldots,\lambda_d')=(\lambda_0+\lambda_0',\ldots,\lambda_d+\lambda_d') \in (\RR/\ZZ)^{d+1}$.
		Moreover, the order of $\Lambda_{\Delta}$ equals the \textit{normalized volume} of $\Delta$, i.e., 
		$d!$ times the usual euclidean volume of $\Delta$,
		which we denote by $\text{Vol}(\Delta)$.
		Let $\ZZ^{d \times d}$ be the set of $d \times d$ integral matrices.
Recall that a matrix $A \in \ZZ^{d \times d}$ is {\em unimodular} if $\det (A) = \pm 1$.
Given lattice polytopes $\Pc$ and $\Qc$ in $\RR^d$ of dimension $d$,
we say that $\Pc$ and $\Qc$ are {\em unimodularly equivalent}
if there exist a unimodular matrix $U \in \ZZ^{d \times d}$
and an integral vector $w$ such that $\Qc=f_U(\Pc)+w$,
where $f_U$ is the linear transformation in $\RR^d$ defined by $U$,
i.e., $f_U({\bf v}) = {\bf v} U$ for all ${\bf v} \in \RR^d$.
In \cite{BH}, it is shown that there is a bijection between unimodular equivalence classes of $d$-dimensional lattice simplices with a 
chosen ordering of their vertices and finite subgroups of $(\RR/\ZZ)^{d+1}$ such that the sum of all entries of each element is an integer.
In particular, two lattice simplices $\Delta$ and $\Delta'$ are unimodularly equivalent if and only if there exists an ordering of their vertices such that $\Lambda_\Delta=\Lambda_{\Delta'}$.

\smallskip
A lattice polytope $\Pc \subset \RR^d$ is called \textit{reflexive} if the origin of $\RR^d$ is the unique lattice point belonging to the interior of $\Pc$ and its dual polytope, i.e., 
	$$\Pc^\vee:=\{y \in \RR^d : \langle x,y\rangle \leq 1 \ \text{for all}\  x \in \Pc \},$$
	is also a lattice polytope, where $\langle x,y\rangle$ is the usual inner product of $\RR^d$.
We say that a lattice polytope $\Pc \subset \RR^d$ is \emph{Gorenstein of index} $r$ where $r\in \ZZ_{>0}$ if 
there exist a reflexive polytope $\Qc \subset \RR^d$ and a lattice point $w \in \ZZ^d$ such that $\Qc=r\Pc+w$ (\cite{DeNegriHibi}).
Equivalently, the semigroup algebra associated to the cone over $\Pc$ is a Gorenstein algebra.
We call $\Qc$ the \textit{associated dual reflexive polytope} of $\Pc$.
Gorenstein polytopes are of interest in combinatorial commutative algebra, mirror symmetry and tropical geometry
(for details we refer to \cite{Batyrev,JK}).
For a lattice polytope $\Pc \subset \RR^d$ of dimension $d$,
we can construct a new lattice polytope
$$\text{Pyr}(\Pc):=\text{conv}(\Pc \times \{0 \}, (0,\ldots,0,1)) \subset \RR^{d+1}$$
of dimension $d+1$.
This polytope $\text{Pyr}(\Pc)$ is called the \textit{lattice pyramid} over $\Pc$.
Then we have $\text{Vol}(\Pc)=\text{Vol}(\text{Pyr}(\Pc))$.
Moreover, it is known that $\Pc$ is Gorenstein of index $r$ if and only if $\text{Pyr}(\Pc)$ is Gorenstein of index $r+1$.
Hence, if we construct all Gorenstein polytopes which are not lattice pyramids, we can obtain all Gorenstein polytopes.
In each dimension, there exists only finitely many Gorenstein polytopes up to unimodular equivalence (\cite{Lag}),
and they are known up to dimension $4$ (\cite{Kre}).
The works \cite{BJ,HNT} also provide some classification results for Gorenstein polytopes in the high dimensional setting.

\smallskip
In this paper, we discuss a characterization of Gorenstein simplices in terms of their associated finite abelian groups.
In Section 1, we recall the Hermite normal form matrices and some of their properties that we will use in this paper.
In Section 2, we prove that a family of simplices arising from Hermite normal form matrices are Gorenstein (Theorem \ref{one}).
Using this result, we characterize Gorenstein simplices whose normalized volume is a prime number.
In fact, we will prove the following.
\begin{Theorem}
		\label{p}
	Let $p$ be a prime number and $\Delta \subset \RR^d$ a $d$-dimensional lattice simplex with normalized volume  $p$.
	Suppose that $\Delta$ is not a lattice pyramid over any lower-dimensional simplex.
	Then $\Delta$ is Gorenstein of index $r$ if and only if $d=rp-1$ and $\Lambda_{\Delta}$ is generated by $\left(\dfrac{1}{p},\ldots,\dfrac{1}{p}\right)$.
\end{Theorem}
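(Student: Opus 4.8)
The plan is to translate the Gorenstein property into an explicit condition on the finite abelian group $\Lambda_\Delta$ and then exploit that $|\Lambda_\Delta| = \mathrm{Vol}(\Delta) = p$ is prime. First I would record the structure of $\Lambda_\Delta$. Since its order equals $\mathrm{Vol}(\Delta)=p$, the group $\Lambda_\Delta$ is cyclic of order $p$; fix a generator $g = (a_0/p,\ldots,a_d/p)$ with $a_i \in \{0,1,\ldots,p-1\}$ and $\sum_{i=0}^{d} a_i \equiv 0 \pmod p$, the latter because the sum of the entries of every element of $\Lambda_\Delta$ is an integer. The hypothesis that $\Delta$ is not a lattice pyramid enters here: a lattice simplex is a lattice pyramid if and only if some coordinate vanishes on every element of its weight group, so ``not a pyramid'' forces $a_i \neq 0$ for every $i$. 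As $p$ is prime, each $a_i$ is then invertible modulo $p$, so the $i$-th coordinates of $g,2g,\ldots,(p-1)g$ run through all of $\{1/p,2/p,\ldots,(p-1)/p\}$; in particular, in every coordinate the smallest positive value attained on $\Lambda_\Delta$ is exactly $1/p$.

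The crux is a description of the Gorenstein property through $\Lambda_\Delta$. Writing $C = \cone\{(v_0,1),\ldots,(v_d,1)\}$, the equivalence stated above identifies the Gorenstein property of $\Delta$ with that of the semigroup algebra of $C \cap \ZZ^{d+1}$, which by the Danilov--Stanley criterion holds iff $\mathrm{int}(C)\cap\ZZ^{d+1} = c + (C\cap\ZZ^{d+1})$ for some lattice point $c$. Using the isomorphism between $\Lambda_\Delta$ and $\ZZ^{d+1}/\sum_i \ZZ(v_i,1)$, every lattice point of $C$ is uniquely $\sum_i (\bar\mu_i + c_i)(v_i,1)$ with $\mu \in \Lambda_\Delta$, representatives $\bar\mu_i \in [0,1)$, and $c_i \in \ZZ_{\geq 0}$, and it is interior iff $\bar\mu_i + c_i > 0$ for all $i$. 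Comparing the two sides coordinate by coordinate, I would show that such a $c$ exists iff there is $\eta \in \Lambda_\Delta$ whose representative $\bar\eta_i$ equals $\min\{\bar\mu_i : \mu \in \Lambda_\Delta,\ \bar\mu_i > 0\}$ simultaneously for every $i$; the generator $c = \sum_i \bar\eta_i (v_i,1)$ then has height $\sum_i \bar\eta_i$, which is exactly the Gorenstein index $r$. Establishing this criterion is the main obstacle; alternatively, it can be read off from Theorem \ref{one}.

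Finally I would combine the two steps. By the first step the coordinatewise minimal positive value is $1/p$ in every coordinate, so the criterion demands $(1/p,\ldots,1/p) \in \Lambda_\Delta$. Since $\Lambda_\Delta = \langle g\rangle$, this element lies in $\Lambda_\Delta$ iff $k a_i \equiv 1 \pmod p$ for a single $k$ and all $i$, i.e.\ iff all the $a_i$ are equal; and when they are, $g$ is a multiple of $(1/p,\ldots,1/p)$, whence $\Lambda_\Delta = \langle(1/p,\ldots,1/p)\rangle$. Thus $\Delta$ is Gorenstein iff $\Lambda_\Delta = \langle(1/p,\ldots,1/p)\rangle$, and in that case the index is $r = \sum_{i=0}^{d} 1/p = (d+1)/p$, which rearranges to $d = rp-1$; conversely $\Lambda_\Delta = \langle(1/p,\ldots,1/p)\rangle$ together with $d = rp-1$ produces the required $\eta = (1/p,\ldots,1/p)$ of height $r$. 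As a consistency check, $\sum_i a_i \equiv 0 \pmod p$ with all $a_i$ equal and nonzero forces $p \mid d+1$, matching $d = rp-1$.
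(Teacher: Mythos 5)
Your argument is correct, but it takes a genuinely different route from the paper. The paper never formulates an abstract Gorenstein criterion on $\Lambda_\Delta$: it uses that a simplex of prime normalized volume is unimodularly equivalent to a simplex $\Delta(A)$ arising from a Hermite normal form matrix with a single nonstandard row, notes via Lemma \ref{onelem} that the non-pyramid hypothesis forces $1\le a_0,\dots,a_{d-1}<p$, and then invokes Theorem \ref{one}, whose proof locates the unique interior lattice point of $r\Delta(A)$ and verifies reflexivity of the translate by writing down the supporting hyperplanes of all facets and applying Lemma \ref{facet}; divisibility $a_i\mid p$ then forces $a_i=1$ and $rp=d+1$. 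You instead stay entirely inside the group: you encode the Gorenstein property through the Danilov--Stanley description of the cone and arrive at the criterion that a non-pyramid simplex is Gorenstein of index $r$ iff some $\eta\in\Lambda_\Delta$ realizes, simultaneously in every coordinate, the minimal positive value attained on $\Lambda_\Delta$, with $r=\sum_i\bar\eta_i$. That criterion is the load-bearing step and you only sketch it, but the sketch is sound (the ``if'' direction takes $c=\sum_i\bar\eta_i(v_i,1)$; the ``only if'' direction tests the putative generator against the minimal interior lattice point in each class of $\Lambda_\Delta$), and once it is in place primality makes the conclusion immediate, since invertibility of each $a_i$ modulo $p$ forces all coordinatewise minima to equal $1/p$. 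Your approach buys an intrinsic, coordinate-free criterion that would also subsume Theorem \ref{one} and extend to the $p^2$ and $pq$ cases; the paper's approach buys the explicit vertex description of the dual reflexive simplex (Lemma \ref{ver}), which it needs for the volume computations of Section 4. One caveat: your fallback of ``reading the criterion off from Theorem \ref{one}'' is legitimate here only because prime volume forces a cyclic $\Lambda_\Delta$ coming from one nonstandard row; for general groups the criterion must be proved as you sketched.
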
 
In Section 3, we extend these results by characterizing Gorenstein simplices whose normalized volume equals $p^2$ and $pq$, where $p$ and $q$ are prime numbers with $p \neq q$. In fact, we will prove the following theorems.
\begin{Theorem}
	\label{pp}
	Let $p$ be a prime number and $\Delta \subset \RR^d$ a $d$-dimensional lattice simplex with normalized volume $p^2$.
	Suppose that $\Delta$ is not a lattice pyramid over any lower-dimensional lattice simplex.
	Then $\Delta$ is Gorenstein of index $r$ if and only if one of the followings is satisfied:
	\begin{enumerate}
		\item There exists an integer $s$ with $0 \leq s \leq d-1$ such that $rp^2-1=(d-s)+ps$ and $\Lambda_{\Delta}$ is generated by $\left(\underbrace{\dfrac{1}{p},\ldots,\dfrac{1}{p}}_{s},\underbrace{\dfrac{1}{p^2},\ldots,\dfrac{1}{p^2}}_{d-s+1}\right)$ 
		for some ordering of the vertices of $\Delta$.
		\item    $d=rp-1$ and there exist an integer $s$ with $1 \leq s \leq d-1$    
		and integers $1 \leq a_1,\ldots,a_{s-1} \leq p-1$
		such that  $\Lambda_{\Delta}$ is generated by 
		$$\left(\dfrac{2-\sum\limits_{1 \leq i \leq s-1}a_i}{p},\dfrac{a_1+1}{p},\ldots,\dfrac{a_{s-1}+1}{p},0,\dfrac{1}{p},\ldots,\dfrac{1}{p}\right)$$ and $$\left(\dfrac{\left(\sum\limits_{1 \leq i \leq s-1}a_i\right)-1}{p},\dfrac{p-a_1}{p},\ldots,\dfrac{p-a_{s-1}}{p},\dfrac{1}{p},0,\ldots,0\right)$$
		for some ordering of the vertices of $\Delta$.
	\end{enumerate}
\end{Theorem}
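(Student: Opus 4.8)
The plan is to split the argument according to the structure of the order-$p^2$ group $\Lambda_\Delta$: being abelian of order $p^2$, it is either cyclic $\ZZ/p^2\ZZ$ or elementary abelian $(\ZZ/p\ZZ)^2$, and these two cases will produce alternatives (1) and (2) respectively. Before dividing into cases I would record the following criterion, which refines the reasoning behind Theorem~\ref{one}: for a non-pyramid simplex, if $n_i$ denotes the order of the image of $\Lambda_\Delta$ under the projection to the $i$-th coordinate, then $\Delta$ is Gorenstein of index $r$ if and only if the element $c:=(1/n_0,\ldots,1/n_d)$ lies in $\Lambda_\Delta$ and $\sum_{i=0}^{d}1/n_i=r$. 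The geometric content is that $c$ is the only candidate for the unique minimal interior lattice point of the cone over $\Delta$, and its membership in $\Lambda_\Delta$ is exactly what forces the interior lattice points of the cone to form a single translate of the whole semigroup; I would prove this equivalence through that translate description of the interior. Since $\Delta$ is assumed not to be a pyramid, every $n_i\ge 2$, so every coordinate of $c$ lies in $(0,1)$, and $\sum_i 1/n_i=r$ is automatically an integer once $c\in\Lambda_\Delta$.

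Assume first $\Lambda_\Delta\cong\ZZ/p^2\ZZ$, generated by $(a_0/p^2,\ldots,a_d/p^2)$ with all $a_i\neq 0$ (the non-pyramid condition). Then each $n_i\in\{p,p^2\}$ according to whether $p\mid a_i$ or not, and at least one $n_i=p^2$, since otherwise the generator would have order dividing $p$. Let $s$ be the number of indices with $n_i=p$. Membership $c\in\Lambda_\Delta$ forces, after replacing the generator by a suitable multiple and reordering the vertices, that $a_i=p$ on the $s$ coordinates with $n_i=p$ and $a_i=1$ on the remaining $d+1-s$ coordinates, so that the generator becomes $c$ itself. The degree identity $\sum_i 1/n_i=s/p+(d+1-s)/p^2=r$ rearranges to $rp^2-1=(d-s)+ps$, which is precisely alternative (1); the existence of a coordinate of order $p^2$ gives $s\le d$, while reducing the relation modulo $p$ gives $d-s\equiv -1\pmod p$, hence $d-s\ge p-1\ge 1$ and the stated range $0\le s\le d-1$.

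Assume next $\Lambda_\Delta\cong(\ZZ/p\ZZ)^2$. Here every $n_i\in\{1,p\}$, and non-pyramid means all $n_i=p$, so $c=(1/p,\ldots,1/p)$ and the criterion reads $c\in\Lambda_\Delta$ together with $(d+1)/p=r$, i.e.\ $d=rp-1$. Identifying $\tfrac1p\ZZ/\ZZ$ with $\FF_p$, I would view $\Lambda_\Delta$ as a $2$-dimensional $\FF_p$-subspace $V$ of the hyperplane $H=\{x\in\FF_p^{\,d+1}:\sum_i x_i=0\}$, the integrality of $\Lambda_\Delta$ being exactly the condition $V\subseteq H$; the Gorenstein criterion becomes $\mathbf 1\in V$, whereupon the non-pyramid condition is automatic because $\mathbf 1$ has all coordinates nonzero. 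Thus I must classify, up to coordinate permutation (that is, up to reordering the vertices), all $2$-dimensional $V\subseteq H$ with $\mathbf 1\in V$. Writing $V=\langle \mathbf 1,w\rangle$, the remaining data is the multiset of entries of $w$ up to the affine group $w\mapsto tw+m\mathbf 1$ $(t\in\FF_p^\times)$ and up to permutation; normalizing $w$ so that $d-s$ coordinates vanish and one distinguished coordinate equals $1$, placed last, I would obtain the two displayed generators of alternative (2), with $s$ recording the size of the support of the normalized $w$ and $a_1,\ldots,a_{s-1}\in\{1,\ldots,p-1\}$ its remaining entries; a direct computation shows the two generators sum to $\mathbf 1$, confirming $c\in V$.

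For the converse in each case I would realize the listed groups by explicit Hermite normal form matrices and invoke Theorem~\ref{one} to conclude that the corresponding simplices are genuinely Gorenstein of the stated index, the bijection of \cite{BH} ensuring that such simplices exist and are determined up to unimodular equivalence by their groups. The main obstacle is the classification in the elementary abelian case: I must show that the normal form is exhaustive (every $V\ni\mathbf 1$ is equivalent, under a reordering of the vertices, to one of the listed forms) and that the parameter ranges $1\le s\le d-1$ and $1\le a_i\le p-1$ capture exactly these $V$ without spurious redundancy, which is a careful piece of $\FF_p$-linear algebra interacting with the symmetric-group action on coordinates. Establishing the coordinate-order criterion itself, if it is not already isolated before Theorem~\ref{one}, is the other point requiring care, and I would handle it through the translate description of the interior of the cone over $\Delta$.
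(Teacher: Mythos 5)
Your plan is sound, but it follows a genuinely different route from the paper. The paper never isolates a group-theoretic Gorenstein criterion: it reduces $\Delta$ to a Hermite normal form simplex with one or two nonstandard rows, writes down the supporting hyperplanes of the facets of $r\Delta(A,B)-t$ explicitly (Lemmas \ref{fac1}, \ref{fac2}), and uses Lemma \ref{facet} together with the reduction of Lemma \ref{lempq} to force $t=(1,\ldots,1)$ and the divisibility conditions $p-a_i-b_i=1$, $p-b_i=1$, $rp-d\in\{1,p\}$, discarding $rp-d=p$ by the pyramid test (Lemma \ref{Nill}); the converse direction is Lemma \ref{ppver}, again via explicit dual vertices. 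You instead prove once that, for a non-pyramid simplex, being Gorenstein of index $r$ is equivalent to $(1/n_0,\ldots,1/n_d)\in\Lambda_\Delta$ and $\sum_i 1/n_i=r$, after which the theorem is pure group theory split along $\Lambda_\Delta\cong\ZZ/p^2\ZZ$ versus $(\ZZ/p\ZZ)^2$ — a dichotomy that does match alternatives (1) and (2), since the groups listed there are respectively cyclic and elementary abelian. That criterion is true, and your proposed proof of it (the translate description of the interior of the cone over $\Delta$, using that the minimal positive $i$-th coordinate over $\Lambda_\Delta$ is $1/n_i$) works in both directions; it is the real content of your argument and has no counterpart in the paper, so it must be written out in full. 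Your approach buys uniformity (forward and converse implications in both cases fall out of one statement) and avoids coordinate computations; the paper's approach buys the explicit vertices of the dual reflexive simplex, which it reuses in Section 4. Two repairs are needed: the converse of case (2) cannot be obtained by invoking Theorem \ref{one}, which only covers cyclic $\Lambda_\Delta$ (one nonstandard row) — but it is immediate from your criterion, since the two listed generators sum to $(1/p,\ldots,1/p)$; and the deferred $\FF_p$-classification does close (translate $w$ by a multiple of the all-ones vector to create a nonempty proper zero set of size $d-s$, scale one remaining coordinate to $1$ and place it in position $s$, and let the sum-zero condition determine coordinate $0$), though the support of the normalized $w$ then has size $s$ or $s+1$, not $s$ as stated.
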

\begin{Theorem}
		\label{pq}
	Let $p$ and $q$ be prime numbers with $p \neq q$ and 
	$\Delta \subset \RR^d$ a $d$-dimensional lattice simplex with normalized volume  $pq$.
	Suppose that $\Delta$ is not a lattice pyramid over any lower-dimensional lattice simplex.
	Then $\Delta$ is Gorenstein of index $r$ if and only if 
	there exist nonnegative integers $s_1,s_2,s_3$ with $s_1+s_2+s_3=d+1$ such that the following conditions are satisfied:
	\begin{enumerate}
		\item $rpq=s_1q+s_2p+s_3$;
		\item$\Lambda_{\Delta}$ is generated by
		$\left(
		\underbrace{\dfrac{1}{p},\ldots,\dfrac{1}{p}}_{s_1},\underbrace{\dfrac{1}{q},\ldots,\dfrac{1}{q}}_{s_2},\underbrace{\dfrac{1}{pq},\ldots,\dfrac{1}{pq}}_{s_3}\right)$ 	for some ordering of the vertices of $\Delta$.
	\end{enumerate}
\end{Theorem}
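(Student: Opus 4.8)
The plan is to use that $|\Lambda_\Delta| = \mathrm{Vol}(\Delta) = pq$ with $p \neq q$ prime, so that $\Lambda_\Delta$ is cyclic of order $pq$. Fix a generator $g = (b_0/(pq), \ldots, b_d/(pq))$ with $b_i \in \{0,1,\ldots,pq-1\}$ and $\gcd(b_0,\ldots,b_d,pq)=1$. Since it is standard (cf.\ \cite{BH}) that a lattice simplex is a pyramid exactly when some coordinate vanishes on all of $\Lambda_\Delta$, and $\Lambda_\Delta = \langle g\rangle$, the hypothesis that $\Delta$ is not a pyramid is equivalent to $b_i \neq 0$ for all $i$. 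Writing each $b_i$ through the Chinese Remainder Theorem as the pair $(\beta_i,\gamma_i)$ with $\beta_i = b_i \bmod p$ and $\gamma_i = b_i \bmod q$, the condition $(\beta_i,\gamma_i)\neq(0,0)$ splits the coordinates into three types: type A with $\beta_i\neq 0,\ \gamma_i=0$ (coordinate $t_i/p$ of $g$), type B with $\beta_i=0,\ \gamma_i\neq 0$ (coordinate $u_i/q$), and type C with $\beta_i\neq 0,\ \gamma_i\neq 0$ (coordinate $b_i/(pq)$, $\gcd(b_i,pq)=1$). Because $g$ has order $pq$, at least one $p$-supported coordinate (type A or C) and at least one $q$-supported coordinate (type B or C) must occur.

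The key tool I would record is a group-theoretic criterion for the Gorenstein property. Describing the $h^{*}$-vector through the box points of the cone over $\Delta$, one has $h^{*}_k = \#\{\lambda \in \Lambda_\Delta : h(\lambda)=k\}$ where $h(\lambda):=\sum_i \{\lambda_i\}$ and $\{x\}$ denotes the fractional part in $[0,1)$. Since $\Delta$ is Gorenstein if and only if this $h^{*}$-vector is symmetric, and the coordinatewise identity $\{x\}+\{y-x\} = \{y\} + [\{x\}>\{y\}]$ gives $h(\lambda)+h(w-\lambda)=h(w)+\#\{i:\{\lambda_i\}>\{w_i\}\}$, symmetry is equivalent to the existence of a coordinatewise maximal element $w=(w_0,\ldots,w_d)\in\Lambda_\Delta$, i.e.\ $\{\lambda_i\}\le\{w_i\}$ for all $\lambda$ and all $i$; the index is then $r=(d+1)-\sum_i\{w_i\}$. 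The implication $(\Leftarrow)$ is then immediate: for the generator $g'$ of the stated form the element $w=-g'$ realises the maximal fractional parts $(p-1)/p$, $(q-1)/q$, $(pq-1)/(pq)$ in each coordinate simultaneously, so $\Delta$ is Gorenstein, and $\sum_i\{(-g')_i\}=(d+1)-(s_1 q+s_2 p+s_3)/(pq)$ yields index $r$ exactly when condition $(1)$ holds. (Alternatively $(\Leftarrow)$ follows by realising $g'$ as the simplex attached to a suitable Hermite normal form matrix and invoking Theorem \ref{one}.)

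For $(\Rightarrow)$, write the maximiser as $w=k^{*}g$. Coordinatewise maximality forces $k^{*}t_i\equiv -1\pmod p$ in each type-A coordinate, $k^{*}u_i\equiv -1\pmod q$ in each type-B coordinate, and $k^{*}b_i\equiv -1\pmod{pq}$ (hence $k^{*}\beta_i\equiv -1\pmod p$ and $k^{*}\gamma_i\equiv -1\pmod q$) in each type-C coordinate. Because a $p$-supported coordinate exists, $k^{*}$ is invertible modulo $p$, and likewise modulo $q$; comparing the congruences then shows that all $p$-supported residues (the $t_i$ of type A and the $\beta_i$ of type C) share a common value $\beta$ modulo $p$, and all $q$-supported residues (the $u_i$ of type B and the $\gamma_i$ of type C) share a common value $\gamma$ modulo $q$. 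This modular coincidence is the crux of the theorem.

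Once it is in hand, I would choose by the Chinese Remainder Theorem an integer $c$ with $c\equiv\beta^{-1}\pmod p$ and $c\equiv\gamma^{-1}\pmod q$; then $\gcd(c,pq)=1$, so $cg$ is again a generator of the same group $\Lambda_\Delta$, and replacing $g$ by $cg$ sends every type-A coordinate to $1/p$, every type-B coordinate to $1/q$, and every type-C coordinate to $1/(pq)$. Reordering the vertices so that the $s_1$ type-A, $s_2$ type-B, and $s_3$ type-C coordinates are grouped produces the generator of condition $(2)$, with $s_1+s_2+s_3=d+1$ since no coordinate is lost ($\Delta$ is not a pyramid). Finally, reading the index off the criterion gives $r=(d+1)-\sum_i\{(-cg)_i\}=h(cg)=s_1/p+s_2/q+s_3/(pq)$, which is condition $(1)$. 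The main obstacle is the conceptual step of the second paragraph — isolating the coordinatewise-maximal-element criterion, proving its equivalence to the Gorenstein property, and then extracting the modular coincidence from it; the CRT normalisation, reordering, and index bookkeeping are routine thereafter.
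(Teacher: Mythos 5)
Your argument is correct in substance but follows a genuinely different route from the paper. The paper never leaves the coordinate world: it realises $\Delta$ via a Hermite normal form matrix with one or two nonstandard rows ($\Delta(A)$ or $\Delta(A,B)$), writes down the supporting hyperplanes of the facets of $r\Delta$ explicitly (Lemmas \ref{fac1} and \ref{fac2}), and extracts the divisibility constraints such as $pq-pb_i-a_iq\in\{1,p,q\}$ from the requirement (Lemma \ref{facet}) that the dual vertices be lattice points, reading off the generator of $\Lambda_\Delta$ only at the end. You instead work entirely inside the cyclic group $\Lambda_\Delta\cong\ZZ/pq\ZZ$ and replace the dual-polytope computation by the criterion that $\Delta$ is Gorenstein of index $r$ iff $\Lambda_\Delta$ contains a coordinatewise maximal element $w$, with $r=(d+1)-\sum_i\{w_i\}$. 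Granting that criterion, your CRT analysis of the congruences $k^{*}t_i\equiv-1$ is clean and correct, and it buys something the paper's method does not: it avoids the case analysis over Hermite normal forms (in particular the reduction of the $b_s=q-1$ case in Lemma \ref{lempq}), and it visibly generalises to any squarefree normalized volume. What it costs is that the maximal-element criterion must itself be established, and you only assert it; the direction you need for necessity (Gorenstein $\Rightarrow$ existence of $w$) is the nontrivial one. It does follow from the identity you state: take $w$ to be the unique element of top height $s=\deg h^{*}$ (unique because $h^{*}_s=h^{*}_0=1$ under symmetry) and sum $h(\lambda)+h(w-\lambda)=h(w)+\#\{i:\{\lambda_i\}>\{w_i\}\}$ over all $\lambda\in\Lambda_\Delta$; symmetry gives $\sum_\lambda h(\lambda)=\tfrac12\,s\,|\Lambda_\Delta|$, forcing $\sum_\lambda\#\{i:\{\lambda_i\}>\{w_i\}\}=0$ and hence coordinatewise maximality. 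With that short argument (or a citation, since the criterion is known) inserted, your proof is complete.
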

Moreover, we give a class of Gorenstein simplices whose normalized volume equals a power of a prime number (Theorem \ref{power}). 
Finally, in Section 4, we compute the volume of the associated dual reflexive simplices of the Gorenstein simplices described in Sections 2 and 3.

\begin{acknowledgement}{\rm
	The author would like to thank anonymous referees for reading the manuscript carefully.
	The author is partially supported by Grant-in-Aid for JSPS Fellows 16J01549.
}
\end{acknowledgement}

	\section{Preliminaries}
	In this section, we recall some basic facts about Hermite normal form matrices. 
For positive integers $d$ and $m$, we denote by $\text{Herm}(d,m)$ the finite set of lower triangular matrices $H=(h_{ij})_{1 \leq i,j \leq d} \in \ZZ_{\geq 0}^{d \times d}$ with determinant $m$ satisfying $h_{ij}<h_{ii}$ 
for all $i > j$.
It is well known that for any $M \in \ZZ^{d \times d}$ with determinant $m \in \ZZ_{>0}$ there exists a unimodular matrix $U \in \ZZ^{d \times d}$ and a \textit{Hermite normal form matrix} $H \in \text{Herm}(d,m)$ such that
$MU=H$.
Let  $\Delta \subset \RR^d$ be a lattice simplex of dimension $d$ with normalized volume $m$ and $v_0,\ldots,v_d$ the vertices of $\Delta$, and let $V$ be the $(d \times d)$-matrix whose $i$th row is $v_i-v_0$.
Then  one has $|\det(V)|=m$ and we may assume that $\det(V)=m$.
Hence there exist a unimodular matrix $U \in \ZZ^{d \times d}$ and a Hermite normal form matrix $H \in \text{Herm}(d,m)$ such that
$VU=H$.
In particular,  $\Delta$ is unimodularly equivalent to the lattice simplex whose vertices are the origin of $\RR^d$
and all rows of $H$.

Let $H=(h_{ij})_{1 \leq i,j \leq d} \in \ZZ_{\geq 0}^{d \times d}$ be a Hermite normal form matrix
and
set
$\ell(H)=\sharp\{i \mid h_{ii}>1\}$.
We then say that  $H$ has \textit{$\ell(H)$ nonstandard rows}.
Let $\Delta(H)$ be the lattice simplex whose vertices are the origin of $\RR^d$ and all rows of $H$,
and set $s=\max\{i \mid h_{ii}>1\}$.
If $\Delta(H)$ is not a lattice pyramid over any lower-dimensional lattice simplex, then $s=d$.
In \cite{HHL}, lattice simplices arising from Hermite normal form matrices are discussed.
\smallskip
	
We now recall a pair of lemmas that we will use in this paper.
	\begin{Lemma}[{\cite[Corollary 35.6]{HibiRedBook}}]
		\label{facet}
		Let $\Pc \subset \RR^d$ be a lattice polytope of
		dimension $d$ containing the origin in its interior. 
		Then a point $a \in \RR^d$ is a vertex of $\Pc^\vee$ if 
		and only if $\Hc \cap \Pc$ is a facet of $\Pc$,
		where $\Hc$ is the hyperplane
		$$\left\{ x \in \RR^d \ | \ \langle a, x \rangle =1 \right\}$$
		in $\RR^d$.
	\end{Lemma}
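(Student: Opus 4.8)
The plan is to deduce the statement from the classical anti-isomorphism between the face lattices of $\Pc$ and $\Pc^\vee$ induced by polar duality. The starting point is the bipolar relation: since $\Pc$ is a $d$-dimensional polytope containing the origin in its interior, it is bounded and contains a small ball about the origin, so $\Pc^\vee$ is again a $d$-dimensional polytope with the origin in its interior, and $(\Pc^\vee)^\vee = \Pc$. For a face $F$ of $\Pc$ I would introduce its \emph{conjugate face}
$$F^\diamond = \{ y \in \Pc^\vee : \langle x, y \rangle = 1 \ \text{for all} \ x \in F \},$$
and the key input is that $F \mapsto F^\diamond$ is an inclusion-reversing bijection from the faces of $\Pc$ onto the faces of $\Pc^\vee$ with $\dim F + \dim F^\diamond = d - 1$ for every proper face $F$. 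Granting this, the lemma is exactly the instance of the correspondence relating faces of dimension $d-1$ and dimension $0$.

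For the ``if'' direction, suppose $F := \Hc \cap \Pc$ is a facet of $\Pc$, where $\Hc = \{x : \langle a, x\rangle = 1\}$. Then $\langle a, x\rangle \le 1$ holds on all of $\Pc$ with equality on $F$, so $a \in \Pc^\vee$. First I would observe that $a$ is the \emph{unique} point of $\Pc^\vee$ satisfying $\langle x, a\rangle = 1$ for all $x \in F$: indeed $F$ has dimension $d-1$ and does not contain the origin (since $\langle a, x\rangle = 1 \ne 0$ there), so its affine hull is a hyperplane missing the origin, whence an affine basis of $F$ consists of $d$ linearly independent vectors and the equations $\langle x_i, a\rangle = 1$ determine $a$ uniquely. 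Hence $F^\diamond = \{a\}$ is a $0$-dimensional face of $\Pc^\vee$, i.e. $a$ is a vertex.

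For the ``only if'' direction, suppose $a$ is a vertex of $\Pc^\vee$, so that $\{a\}$ is a $0$-dimensional face of $\Pc^\vee$. Applying the same correspondence to $\Pc^\vee$ and using $(\Pc^\vee)^\vee = \Pc$, its conjugate face inside $(\Pc^\vee)^\vee = \Pc$ is
$$\{a\}^\diamond = \{ x \in \Pc : \langle a, x \rangle = 1 \} = \Hc \cap \Pc,$$
which is a face of $\Pc$ of dimension $d - 1 - 0 = d - 1$, that is, a facet.

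The remaining verifications (nonemptiness and full $(d-1)$-dimensionality of $\Hc \cap \Pc$ precisely when $a$ is extremal) all reduce to the two structural facts above, so the main obstacle is really just establishing the conjugate-face correspondence together with the dimension identity $\dim F + \dim F^\diamond = d - 1$. These are the standard facts of polar duality, proved by fixing an irredundant facet presentation $\Pc = \{x : \langle a_j, x\rangle \le 1\}$ and identifying $\Pc^\vee = \operatorname{conv}\{a_j\}$ with the $a_j$ as its vertices; they are exactly the content of the cited \cite[Corollary 35.6]{HibiRedBook}, so in the paper I would simply invoke that reference rather than reprove them.
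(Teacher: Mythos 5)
The paper offers no proof of this lemma at all: it is imported verbatim as Corollary 35.6 of Hibi's book, so there is no argument of the author's to compare yours against. Your proposal is the standard polar-duality proof that underlies that reference, and it is correct. The reduction to the conjugate-face bijection $F \mapsto F^\diamond$ with $\dim F + \dim F^\diamond = d-1$ is exactly the right skeleton, and your two directions instantiate it properly at dimensions $d-1$ and $0$. The only step stated without justification is the opening claim of the ``if'' direction that $\langle a, x\rangle \le 1$ on all of $\Pc$: this does need a word, namely that since $F = \Hc \cap \Pc$ is $(d-1)$-dimensional, $\Hc$ must equal $\aff(F)$, which is the (unique) supporting hyperplane of the facet $F$ of the full-dimensional polytope $\Pc$, and since the origin is interior to $\Pc$ and satisfies $\langle a, 0\rangle = 0 < 1$, the polytope lies in the half-space $\{x : \langle a, x\rangle \le 1\}$, so $a \in \Pc^\vee$. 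Your uniqueness argument for $F^\diamond = \{a\}$ (affinely independent points of a hyperplane off the origin are linearly independent) is correct, as is the use of bipolarity $(\Pc^\vee)^\vee = \Pc$ in the converse. Deferring the conjugate-face correspondence itself to the cited source is reasonable, since that is precisely what the paper does with the whole lemma.
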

	By using this lemma, in order to see whether a lattice polytope is reflexive, we should compute the equations of supporting hyperplanes of facets of the polytope. 
	
	\begin{Lemma}[{\cite[Lemma 12]{Nill}}]
	\label{Nill}
	Let $\Delta \subset \RR^d$ be a lattice simplex of dimension $d$.
	Then $\Delta$ is a lattice pyramid if and only if there is $i \in \{0,\ldots,d\}$ such that $\lambda_i=0$ for all $(\lambda_0,\ldots,\lambda_d) \in \Lambda_\Delta$.
\end{Lemma}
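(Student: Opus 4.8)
The plan is to translate both sides of the equivalence into a single statement about the integrality of a column of an inverse matrix. Order the vertices as $v_0,\dots,v_d$ and let $W$ be the $(d+1)\times(d+1)$ matrix whose $i$-th row is $(v_i,1)$; since $\Delta$ is $d$-dimensional these rows are linearly independent, so $W$ is invertible with $|\det W|=\text{Vol}(\Delta)$. Reading the defining condition $\sum_{i=0}^d\lambda_i(v_i,1)\in\ZZ^{d+1}$ as $\lambda W\in\ZZ^{d+1}$ for a row vector $\lambda$, the lattice of lifts of $\Lambda_\Delta$ is exactly $\ZZ^{d+1}W^{-1}$, which contains $\ZZ^{d+1}$ and has $\Lambda_\Delta$ as its quotient. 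Hence the $i$-th coordinate of every element of $\Lambda_\Delta$ vanishes in $\RR/\ZZ$ if and only if the $i$-th entry of $nW^{-1}$ is an integer for all $n\in\ZZ^{d+1}$, and since that entry is the pairing of $n$ with the $i$-th column of $W^{-1}$, this happens precisely when the $i$-th column of $W^{-1}$ is integral.

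Next I would identify integrality of the $i$-th column of $W^{-1}$ with $v_i$ lying at lattice distance $1$ from the affine hyperplane spanned by the remaining vertices. Taking $i=0$ for notational convenience, the $0$-th column $c_0=(a,b)$ with $a\in\RR^d$, $b\in\RR$ is characterised by $Wc_0=e_0$, i.e. $\langle a,v_0\rangle+b=1$ and $\langle a,v_i\rangle+b=0$ for $i\ge 1$. The latter equations say that the affine functional $x\mapsto\langle a,x\rangle+b$ vanishes on $\aff(v_1,\dots,v_d)$ and equals $1$ at $v_0$. If $c_0$ is integral this is an integral affine form attaining the value $1$ at $v_0$, and writing $a=ka'$ with $a'$ integral would give $1=\ell(v_0)-\ell(v_1)=k\langle a',v_0-v_1\rangle$, forcing $k=1$; so $a$ is primitive and $v_0$ sits at lattice distance $1$ from $\aff(v_1,\dots,v_d)$. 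Conversely, a primitive integral form witnessing lattice distance $1$ solves the same linear system, which has $c_0$ as its unique solution, so $c_0$ is integral.

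Finally I would record that a lattice simplex is a lattice pyramid exactly when one of its vertices lies at lattice distance $1$ from the opposite facet. One direction is immediate from the definition of $\text{Pyr}(\cdot)$, whose apex sits at lattice height $1$ over the base. For the converse, after a unimodular change of coordinates placing $\aff(v_1,\dots,v_d)$ into $\{x_d=0\}$ and writing $v_0=(v_0',1)$, the unimodular shear $x\mapsto x-x_d\,(v_0',0)$ fixes the base and sends $v_0$ to $(0,\dots,0,1)$, exhibiting $\Delta$ as a pyramid with apex $v_0$. Combining the three steps, the existence of a coordinate that is identically zero on $\Lambda_\Delta$ is equivalent to some vertex being an apex, i.e. to $\Delta$ being a lattice pyramid. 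The only genuinely delicate point is the primitivity argument in the middle step, together with ensuring that the functional vanishes on the entire affine hull rather than merely on the listed vertices; everything else is bookkeeping with $W^{-1}$.
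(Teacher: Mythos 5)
Your argument is correct and complete; note that the paper itself supplies no proof of this lemma, importing it verbatim as \cite[Lemma 12]{Nill}, so there is nothing internal to compare against. Your chain of equivalences --- (i) the $i$-th coordinate of $\Lambda_\Delta$ vanishes identically iff the $i$-th column of $W^{-1}$ is integral, (ii) that column is integral iff $v_i$ lies at lattice distance $1$ from $\aff(v_0,\ldots,\widehat{v_i},\ldots,v_d)$, and (iii) some vertex at lattice distance $1$ from the opposite facet is exactly the lattice-pyramid condition --- is the standard route to this statement and each link is sound: the identification of the lifts of $\Lambda_\Delta$ with $\ZZ^{d+1}W^{-1}$ is immediate from the definition, the primitivity of the linear part $a$ follows correctly from $\langle a, v_0-v_1\rangle=1$, and the vanishing of the affine form on all of $\aff(v_1,\ldots,v_d)$ is guaranteed because $v_1,\ldots,v_d$ are affinely independent. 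The only points you elide are harmless: after moving the base into $\{x_d=0\}$ the apex has last coordinate $\pm 1$ and one may need to negate that coordinate before applying the shear, and the unimodular map carrying a primitive functional to $x_d$ exists because any primitive integer vector extends to a basis of $\ZZ^d$. So the proposal stands as a valid self-contained replacement for the external citation.
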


	\section{Hermite normal form matrices with one nonstandard row}
	For a sequence of integers $A=(a_1,\ldots,a_{d-1},a_d)$ with $1 \leq a_1,\ldots, a_{d-1} \leq a_d$,
	we set $\Delta(A)=\text{conv}(v_0,\ldots,v_d) \subset \RR^d$, where 
	\begin{displaymath}
	v_i=\left\{
	\begin{aligned}
	&(0,\ldots ,0),& \ \textnormal{if}& \ i=0,\\
	&e_i,&\ \textnormal{if} &\  1 \leq i \leq d-1, \\
	&\sum\limits_{j=1}^{d-1} (a_d-a_j)e_j+a_de_d, &\ \textnormal{if} &\  i=d.
	\end{aligned}
	\right.
	\end{displaymath} 
	Here $e_{1}, \ldots, e_{d}$ are the canonical unit coordinate vectors of $\RR^{d}$.  
	Namely, $\Delta(A)$ is a lattice simplex arising from a Hermite normal form matrix with one nonstandard row.
	In particular, the lattice simplices $\Delta(A)$ are exactly the lattice simplices with one unimodular facet.
	
	At first, we give the equations of supporting hyperplanes of facets of $\Delta(A)$.
	
	\begin{Lemma}
		\label{fac1}
	For $0 \leq i \leq d$, let $\Fc_i$ be the facet of $\Delta(A)$ whose vertices are $v_0,\ldots,v_{i-1},v_{i+1},\ldots,v_d$ 
	and $\Hc_i$ the supporting hyperplane of $\Fc_i$.
	Then one has
	\begin{itemize}
		\item $\Hc_0=\{(x_1,\ldots,x_d) \in \RR^d :a_d\sum\limits_{j=1}^{d-1}x_j+(1-\sum\limits_{j=1}^{d-1}(a_d-a_j))x_d=a_d \}$;
		\item $\Hc_i=\{(x_1,\ldots,x_d) \in \RR^d :-a_dx_i+(a_d-a_i)x_d=0 \}$, $1 \leq i \leq d-1$;
			\item $\Hc_d=\{(x_1,\ldots,x_d) \in \RR^d : -x_{d}=0 \}$.
	\end{itemize}
	\end{Lemma}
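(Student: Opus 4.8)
The plan is to prove the lemma by direct verification, exploiting the simplicial structure of $\Delta(A)$. Since $\Delta(A)$ is a $d$-dimensional simplex, its $d+1$ vertices $v_0,\ldots,v_d$ are affinely independent; hence for each $i$ the $d$ vertices retained by $\Fc_i$ are affinely independent and span a unique affine hyperplane, which is precisely $\Hc_i$. Consequently it suffices, for each $i$, to check that the single linear equation displayed in the statement is satisfied by every vertex $v_j$ with $j \neq i$. I would record once and for all that for $1 \le j \le d-1$ the vertex $v_j = e_j$ has $j$th coordinate $1$ and all other coordinates $0$, that $v_0$ is the origin, and that the coordinates of the single ``long'' vertex are $(v_d)_j = a_d - a_j$ for $1 \le j \le d-1$ and $(v_d)_d = a_d$.

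First I would dispatch the two easy families. For $\Hc_d$, the retained vertices are $v_0,\ldots,v_{d-1}$, each of which has vanishing last coordinate, so $-x_d=0$ holds on all of them. For $\Hc_i$ with $1 \le i \le d-1$, the retained vertices other than $v_d$ are $v_0$ and the $v_j$ with $j \neq i$, $j \le d-1$; each of these has both its $i$th and $d$th coordinates equal to $0$, so the equation $-a_d x_i + (a_d-a_i)x_d=0$ is trivially satisfied. The only nontrivial check here is the long vertex $v_d$, for which the left-hand side evaluates to $-a_d(a_d-a_i)+(a_d-a_i)a_d=0$, as required.

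The remaining case $\Hc_0$ is where the slightly heavier bookkeeping lives, since it is the only facet that retains $v_d$ together with all the $e_j$, and its equation carries the coefficient $\bigl(1-\sum_{j=1}^{d-1}(a_d-a_j)\bigr)$ on $x_d$. For each $v_i$ with $1 \le i \le d-1$ the left-hand side reduces to $a_d \cdot 1 = a_d$, matching the right-hand side. For $v_d$ one substitutes $\sum_{j=1}^{d-1}(v_d)_j = \sum_{j=1}^{d-1}(a_d-a_j)$ and $(v_d)_d=a_d$, and the two occurrences of $\sum_{j=1}^{d-1}(a_d-a_j)$ cancel, leaving exactly $a_d$. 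I expect this cancellation in the $v_d$ substitution to be the only computation requiring any care; every other verification is immediate from the coordinate description of the vertices, so the principal effort is organizational rather than conceptual.
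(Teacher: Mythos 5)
Your proof is correct. The paper states Lemma \ref{fac1} without any proof, treating it as a routine computation, and your argument is exactly the verification one would supply: by affine independence of the vertices of the simplex, each facet's supporting hyperplane is the affine hull of the $d$ retained vertices, so it suffices to check that each (visibly nontrivial) displayed equation is satisfied by those vertices, and all three families of substitutions check out, including the cancellation of $\sum_{j=1}^{d-1}(a_d-a_j)$ when evaluating the $\Hc_0$ equation at $v_d$.
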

	
	It is easy to compute $\Lambda_{\Delta(A)}$ for the simplex $\Delta(A)$,
	as demonstrated with the following lemma.
	\begin{Lemma}
		\label{onelem}
		Let $a_0$ be an integer with $1 \leq a_0 \leq a_d$ such that $a_d \mid (a_0+\cdots+a_{d-1}+1)$.
		Then the finite abelian group $\Lambda_{\Delta(A)}$ is generated by 	$\left(\dfrac{a_0}{a_d},\dfrac{a_1}{a_d},\ldots,\dfrac{a_{d-1}}{a_d},\dfrac{1}{a_d}\right).$
		In particular, $\Delta(A)$ is not a lattice pyramid over any lower-dimensional lattice simplex if and only if $1 \leq a_0,a_1,\ldots,a_{d-1} < a_d$.
	\end{Lemma}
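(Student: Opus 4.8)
The plan is to exhibit the displayed element as a member of $\Lambda_{\Delta(A)}$ and then to show that it already has the largest possible order, so that it must generate the entire group. Write $g=\left(\dfrac{a_0}{a_d},\dfrac{a_1}{a_d},\ldots,\dfrac{a_{d-1}}{a_d},\dfrac{1}{a_d}\right)$. Recall from Section 1 that $\Delta(A)$ arises from the Hermite normal form matrix $V$ whose first $d-1$ rows are $e_1,\ldots,e_{d-1}$ and whose last row is $v_d$; expanding along the last column gives $\det V=a_d$, so that $\sharp\Lambda_{\Delta(A)}=\text{Vol}(\Delta(A))=a_d$.

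First I would verify membership, that is, that $\sum_{i=0}^{d}g_i\,(v_i,1)\in\ZZ^{d+1}$. Because $v_0=0$, the first $d$ coordinates receive contributions only from $i\geq 1$: for $1\leq j\leq d-1$ the $j$th coordinate is $g_j+g_d(a_d-a_j)=\dfrac{a_j}{a_d}+\dfrac{a_d-a_j}{a_d}=1$, while the $d$th coordinate is $g_d\,a_d=1$. The last coordinate is $\sum_{i=0}^{d}g_i=\dfrac{a_0+\cdots+a_{d-1}+1}{a_d}$, which lies in $\ZZ$ precisely by the divisibility hypothesis $a_d\mid(a_0+\cdots+a_{d-1}+1)$. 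Hence $g\in\Lambda_{\Delta(A)}$.

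Next I would determine the order of $g$. Its last coordinate $1/a_d$ has order exactly $a_d$ in $\RR/\ZZ$, so the order of $g$ is a multiple of $a_d$; on the other hand $a_d\,g=0$, since every coordinate of $g$ has denominator dividing $a_d$. Thus $g$ has order exactly $a_d$, so the cyclic subgroup $\langle g\rangle$ has order $a_d=\sharp\Lambda_{\Delta(A)}$ and therefore coincides with $\Lambda_{\Delta(A)}$. This proves the first assertion; the only input beyond direct calculation is the equality of group order and normalized volume recalled in the Introduction, so no step here is a genuine obstacle.

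For the final claim I would apply Lemma \ref{Nill}: $\Delta(A)$ is a lattice pyramid if and only if some coordinate vanishes on every element of $\Lambda_{\Delta(A)}=\langle g\rangle$. Writing $g=(c_0,\ldots,c_d)/a_d$ with $c_i=a_i$ for $0\leq i\leq d-1$ and $c_d=1$, the $i$th coordinate of $k\,g$ equals $kc_i/a_d\bmod 1$, and this is identically zero in $k$ if and only if $a_d\mid c_i$, that is, using $1\leq c_i\leq a_d$, if and only if $c_i=a_d$. For $i=d$ this forces $a_d=1$, and for $0\leq i\leq d-1$ it forces $a_i=a_d$. Consequently $\Delta(A)$ fails to be a lattice pyramid exactly when $c_i<a_d$ for every $i$, which together with the standing hypotheses $a_i\geq 1$ is precisely the asserted condition $1\leq a_0,a_1,\ldots,a_{d-1}<a_d$ (the inequality $a_d>1$ demanded by the coordinate $i=d$ being already implied by $1\leq a_0<a_d$). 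The single point requiring care is this last translation, matching the derived inequalities to the stated ones and checking the $a_d=1$ edge case; everything else is routine.
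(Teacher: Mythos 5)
Your proof is correct and follows essentially the same route as the paper: exhibit the element, verify membership by the same coordinate computation, conclude it generates because its order $a_d$ equals $\sharp\Lambda_{\Delta(A)}=\mathrm{Vol}(\Delta(A))$, and then read off the pyramid criterion from Lemma \ref{Nill}. Your treatment of the order and of the $a_d=1$ edge case is slightly more explicit than the paper's, but there is no substantive difference.
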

	\begin{proof}
		Set $$(\lambda_0,\ldots,\lambda_d)=\left(\dfrac{a_0}{a_d},\dfrac{a_1}{a_d},\ldots,\dfrac{a_{d-1}}{a_d},\dfrac{1}{a_d}\right)　 \in (\RR/\ZZ)^{d+1}.$$
Then one has
		$$\sum\limits_{i=0}^{d}\lambda_i(v_i,1)=(b_1,\ldots,b_{d-1},1,\dfrac{a_0+\cdots+a_{d-1}+1}{a_d}) \in \ZZ^{d+1},$$
		where for $1 \leq i \leq d-1$, $b_i=\min\{1,a_d-a_i \}$.
Hence we know that $(\lambda_0,\ldots,\lambda_d)$ is an element of $\Lambda_{\Delta(A)}$.
Since the normalized volume of $\Delta(A)$ is $a_d$ and the order of $(\lambda_0,\ldots,\lambda_d)$ is $a_d$, $\Lambda_{\Delta(A)}$ is generated by  $(\lambda_0,\ldots,\lambda_d)$.
Moreover, by Lemma \ref{Nill},
it is follows that $\Delta(A)$ is not a lattice pyramid over any lower-dimensional lattice simplex if and only if $1 \leq a_0,a_1,\ldots,a_{d-1} < a_d$.
	\end{proof}

The following theorem characterizes exactly when the simplices $\Delta(A)$ are Gorenstein.
\begin{Theorem}
	\label{one}
	Suppose that $1 \leq a_0,\ldots,a_{d-1} <a_d$.
	Then $\Delta(A)$ is Gorenstein of index $r$ if and only if the following conditions are satisfied:
	\begin{itemize}
		\item For $0 \leq i \leq d-1$, $a_i \mid a_d$;
		\item $ra_d=a_0+\cdots+a_{d-1}+1$.
	\end{itemize}	
	\end{Theorem}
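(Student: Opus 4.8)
The plan is to translate the Gorenstein condition into a statement about lattice distances from a suitable center point to the facets of $\Delta(A)$, and then to read off the arithmetic conditions from the explicit facet equations in Lemma \ref{fac1}. By definition $\Delta(A)$ is Gorenstein of index $r$ precisely when $r\Delta(A)+w$ is reflexive for some $w\in\ZZ^d$. By Lemma \ref{facet}, reflexivity of a lattice polytope containing the origin in its interior is equivalent to every facet lying at lattice distance $1$ from the origin. Writing $c=-w/r$ for the preimage of the origin, an elementary scaling computation shows that the facet of $r\Delta(A)+w$ coming from $\Fc_i$ sits at lattice distance $r\,\ell_i(c)$ from the origin, where $\ell_i(c)$ denotes the lattice distance from $c$ to $\Fc_i$. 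Hence $\Delta(A)$ is Gorenstein of index $r$ if and only if there is a point $c$ with $\ell_0(c)=\cdots=\ell_d(c)=1/r$, subject to the integrality constraints $r\in\ZZ_{>0}$ and $rc\in\ZZ^d$ (the latter ensuring $w=-rc$ is a lattice point).

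First I would compute the functions $\ell_i$ from Lemma \ref{fac1}. The normal of $\Hc_d$ is already primitive, giving $\ell_d(c)=c_d$. For $1\le i\le d-1$ the normal of $\Hc_i$ has content $g_i:=\gcd(a_d,a_d-a_i)=\gcd(a_i,a_d)$, so after dividing by $g_i$ and fixing the sign so that $\ell_i$ is nonnegative on the interior, one gets $\ell_i(c)=\bigl(a_dc_i-(a_d-a_i)c_d\bigr)/g_i$. For $\Fc_0$ the content of the normal of $\Hc_0$ is $g_0:=\gcd(a_d,\,1-\sum_{j=1}^{d-1}(a_d-a_j))$, and using the congruence $a_d\mid(a_0+\cdots+a_{d-1}+1)$ from Lemma \ref{onelem} this simplifies to $g_0=\gcd(a_0,a_d)$. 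I would then impose $\ell_0(c)=\cdots=\ell_d(c)=\delta$ for a common value $\delta$. The equations for $\Fc_d$ and $\Fc_i$ give $c_d=\delta$ and $c_i=(a_d-a_i+g_i)\delta/a_d$ for $1\le i\le d-1$; substituting these into the equation for $\Fc_0$ and using the identity $\sum_{j=1}^{d-1}(a_d-a_j)+\bigl(1-\sum_{j=1}^{d-1}(a_d-a_j)\bigr)=1$ yields $\delta\bigl(1+\sum_{j=0}^{d-1}g_j\bigr)=a_d$. Thus the equidistant center $c$ is unique, and $r=1/\delta=\bigl(1+\sum_{j=0}^{d-1}g_j\bigr)/a_d$.

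It remains to impose the integrality constraints. For $1\le i\le d-1$ we have $rc_i=(a_d-a_i+g_i)/a_d$, which lies in $\ZZ$ exactly when $a_d\mid(g_i-a_i)$; since $1\le g_i\le a_i<a_d$, this forces $g_i=a_i$, i.e.\ $a_i\mid a_d$. Assuming these hold, $g_j=a_j$ for $1\le j\le d-1$, and the requirement $r\in\ZZ$ becomes $a_d\mid(1+g_0+\sum_{j=1}^{d-1}a_j)$; feeding in the congruence $1+\sum_{j=1}^{d-1}a_j\equiv-a_0\pmod{a_d}$ from Lemma \ref{onelem} reduces this to $a_d\mid(g_0-a_0)$, and again $1\le g_0\le a_0<a_d$ forces $g_0=a_0$, i.e.\ $a_0\mid a_d$. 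Conversely, once $a_i\mid a_d$ for all $0\le i\le d-1$ we get $g_j=a_j$ throughout, the center is $c=(1/r,\ldots,1/r)$ with $rc=(1,\ldots,1)\in\ZZ^d$, and $r=1/\delta$ becomes exactly $ra_d=a_0+\cdots+a_{d-1}+1$. Since every step above is an equivalence, this establishes both directions.

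The hard part will be the bookkeeping in extracting the primitive normals, and in particular identifying the content $g_0$ of the $\Hc_0$ normal as $\gcd(a_0,a_d)$ through the divisibility from Lemma \ref{onelem}, together with the simplification of the $\Fc_0$ equation into the clean form $\delta\bigl(1+\sum g_j\bigr)=a_d$. The two integrality arguments, each reducing $a_d\mid(g-a)$ with $1\le g\le a<a_d$ to $g=a$, are the conceptual heart and must be stated carefully; everything else is routine linear algebra on the unique equidistant center.
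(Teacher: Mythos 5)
Your argument is correct and follows essentially the same route as the paper: both rest on the explicit facet hyperplanes of Lemma \ref{fac1} together with the reflexivity criterion of Lemma \ref{facet}, and both extract the divisibility conditions $a_i \mid a_d$ and the index equation $ra_d=a_0+\cdots+a_{d-1}+1$ from integrality of the dual's vertex data (equivalently, of the primitive-normal/lattice-distance data). The only organizational difference is that you solve the linear system for the unique equidistant center and then impose $r\in\ZZ$ and $rc\in\ZZ^d$, whereas the paper first pins down the interior lattice point of $r\Delta(A)$ as $(1,\ldots,1)$ and then reads the conditions off the vertices of the dual simplex; the underlying computations coincide.
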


In order to prove this theorem, we show the following lemma. 

\begin{Lemma}
	\label{ver}
		Suppose that $1 \leq a_0,\ldots,a_{d-1} <a_d$,
				 $ra_d=a_0+\cdots+a_{d-1}+1$ and 
		for $0 \leq i \leq d-1$, $a_i \mid a_d$.
		Then
$\Delta(A)$ is Gorenstein of index $r$.
	Moreover, the vertices of the associated dual reflexive simplex are the following lattice points:
		 		 	 \begin{itemize}
		 \item $-e_d$;
		 \item $-\dfrac{a_d}{a_i}e_i+\dfrac{a_d-a_i}{a_i}e_d$ for  $1 \leq i \leq d-1$; 
		 \item $\dfrac{a_d}{a_0}\sum\limits_{j=1}^{d-1}e_j+\dfrac{(r-d+1)a_d-a_0}{a_0}e_d$.
		 \end{itemize} 
\end{Lemma}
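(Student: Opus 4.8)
The plan is to produce the witnessing reflexive simplex by hand and read its dual off the facet data of Lemma \ref{fac1}. I would set $c=\tfrac{1}{r}(1,\ldots,1)$ and $w=-(1,\ldots,1)\in\ZZ^d$, and claim that $\Qc:=r\Delta(A)+w$ is the reflexive simplex exhibiting $\Delta(A)$ as Gorenstein of index $r$, with $\Qc^\vee$ the associated dual reflexive simplex. Since the vertices $v_0,\ldots,v_d$ of $\Delta(A)$ are lattice points and $w\in\ZZ^d$, the image $\Qc$ is again a lattice simplex, so by Lemma \ref{facet} it suffices to show that, in the coordinates $y=rx+w$ (equivalently $x_i=(y_i+1)/r$), every supporting hyperplane of a facet of $\Qc$ takes the form $\langle a,y\rangle=1$ with $a\in\ZZ^d$; the vectors $a$ so obtained are then exactly the vertices of $\Qc^\vee$.

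Next I would push the three families of hyperplanes of Lemma \ref{fac1} through this substitution. The hyperplane $\Hc_d$ becomes $y_d=-1$, i.e. $\langle -e_d,y\rangle=1$. For $\Hc_i$ with $1\le i\le d-1$, clearing denominators turns $-a_dx_i+(a_d-a_i)x_d=0$ into $-a_dy_i+(a_d-a_i)y_d=a_i$, and dividing by $a_i$ — legitimate precisely because $a_i\mid a_d$ makes $a_d/a_i$ and $(a_d-a_i)/a_i$ integers — gives $\langle -\tfrac{a_d}{a_i}e_i+\tfrac{a_d-a_i}{a_i}e_d,\,y\rangle=1$. For $\Hc_0$ I would first invoke $ra_d=a_0+\cdots+a_{d-1}+1$ to rewrite its $x_d$-coefficient as $(r-d+1)a_d-a_0$; after substituting and simplifying, the volume identity collapses the right-hand side to $a_0$, and dividing by $a_0$ (again an integral divisor of $a_d$) yields $\langle \tfrac{a_d}{a_0}\sum_{j=1}^{d-1}e_j+\tfrac{(r-d+1)a_d-a_0}{a_0}e_d,\,y\rangle=1$. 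These $d+1$ normal vectors are exactly the points listed in the statement, and each is visibly integral.

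To conclude I would verify that the origin lies in the interior of $\Qc$: evaluating each facet functional at $y=0$ gives $0<1$, so $0$ satisfies every facet inequality strictly. Since in addition every facet of $\Qc$ sits at lattice distance one from the origin, the dual $\Qc^\vee$ (the convex hull of the $d+1$ integral normals above) is a lattice polytope and the origin is the unique interior lattice point, so $\Qc$ is reflexive. Hence $\Delta(A)=\tfrac{1}{r}(\Qc-w)$ is Gorenstein of index $r$, and the vertices of the associated dual reflexive simplex $\Qc^\vee$ are as claimed; counting gives $1+(d-1)+1=d+1$ vertices, confirming that $\Qc^\vee$ is a simplex.

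The computations are routine once the center is in hand, so the only genuine content is the guess $c=\tfrac{1}{r}(1,\ldots,1)$, $w=-(1,\ldots,1)$. What I would emphasize is that the two hypotheses play sharply distinct roles: the divisibility conditions $a_i\mid a_d$ are exactly what force each facet normal of $\Qc$ to be an integral (indeed primitive) vector, making $\Qc^\vee$ a lattice polytope, while the volume relation $ra_d=a_0+\cdots+a_{d-1}+1$ is exactly what normalizes every facet constant to $1$. The main obstacle is thus not the verification but recognizing these data in advance; this separation of roles is also what foreshadows the necessity direction of Theorem \ref{one}.
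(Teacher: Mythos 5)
Your proof is correct and takes essentially the same route as the paper: translate $r\Delta(A)$ by $-(1,\ldots,1)$, push the facet hyperplanes of Lemma \ref{fac1} through the substitution, and observe that the divisibility hypotheses make each normal integral while the relation $ra_d=a_0+\cdots+a_{d-1}+1$ normalizes every facet constant to $1$, so Lemma \ref{facet} yields reflexivity and the listed dual vertices.
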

\begin{proof}
	Since $a_d(d-1)+(1-\sum_{j=1}^{d-1}(a_d-a_j))=ra_d-a_1 < ra_d$,
	by Lemma \ref{fac1}, we know that $t=(1,\ldots,1)$ is an interior lattice point of $r\Delta(A)$.
	Set $\Delta=r\Delta(A)-t$.
	Then by Lemma \ref{fac1}, the equations of supporting hyperplanes of facets of $\Delta$ are as follows:
	\begin{itemize}
		\item $-x_{d}=1$;
		\item $-a_dx_i+(a_d-a_i)x_d=a_i$, $1 \leq i \leq d-1;$
		\item $a_d\sum\limits_{j=1}^{d-1}x_j+(1-\sum\limits_{j=1}^{d-1}(a_d-a_j))x_d=a_0$.
	\end{itemize}
Hence by Lemma \ref{facet}, $\Delta$ is reflexive and we can obtain the vertices of $\Delta^{\vee}$.
	\end{proof}
Now, we prove Theorem \ref{one}.
\begin{proof}[Proof of Theorem \ref{one}]
	Let $t=(t_1,\ldots,t_d) \in \RR^d$ be the unique interior lattice point of $r\Delta(A)$ and $\Delta'=r\Delta(A)-t$.
	Then for each $i$, one has $t_i  \geq 1$.
	By Lemma \ref{fac1}, the equation $-x_{d}=t_d$ is a supporting hyperplane of a facet of $\Delta'$.
	Hence by Lemma \ref{facet}, $w_0=-e_d/t_d$ is a vertex of $(\Delta')^{\vee}$.
	Therefore, we obtain $t_d=1$. 
	If for some $i$, $t_i \geq 2$, then $(t_1,\ldots,t_{i-1},t_i-1,t_{i+1},\ldots,t_{d-1},1)$ is the interior lattice point of $r\Delta(A)$.
	Since $(t_1,\ldots,t_d)$ is the unique interior lattice point of $r\Delta(A)$, one has 
	$t_1=\cdots=t_{d-1}=1$.
Therefore, by Lemma \ref{facet},  the following points are the vertices of $(\Delta')^{\vee}$:
\begin{displaymath}
w_i=\left\{
\begin{aligned}
&-e_d,& \ \textnormal{if}& \ i=0,\\
&-\dfrac{a_d}{a_i}e_i+\dfrac{a_d-a_i}{a_i}e_d,&\ \textnormal{if} &\  1 \leq i \leq d-1,\\
&\dfrac{a_d}{a}\sum\limits_{j=1}^{d-1}e_j+\dfrac{1-\sum\limits_{j=1}^{d-1}(a_d-a_j)}{a}e_d, &\ \textnormal{if} &\  i=d,
\end{aligned}
\right.
\end{displaymath} 
where $a=ra_d-\sum_{j=1}^{d-1}{a_j-1}$.
	Since the origin of $\RR^d$ belongs to the interior of $\Delta'$,
   we obtain $a>0$.
   Moreover, $\Delta'$ is reflexive, by Lemma \ref{facet}, it is known that  $a$ divides $a_d$.
   Hence  one has $1 \leq a < a_d$.
	Therefore, since $a_d \mid (a+a_1+\cdots+a_{d-1}+1)$ and $1 \leq a_0 < a_d$, we obtain  $a=a_0$.
	By Lemma \ref{ver}, this completes the proof.
\end{proof}

	By Lemmas \ref{facet} and \ref{ver}, 
	we can prove Theorem \ref{one}.
%
\begin{Remark}
If $\Delta(A)$ is Gorenstein of index $1$,
then $\Delta(A)$ is unimodularly equivalent to a lattice polytope $\Delta=\textnormal{conv}(e_1,\ldots,e_d, -\sum_{i=1}^{d}a_{i-1}e_i)$.
In \cite{BDS}, properties of this polytope $\Delta$ are discussed.
\end{Remark}
We obtain Theorem \ref{p} as a special case of Theorem \ref{one}.
\begin{proof}[Proof of Theorem \ref{p}]
	Since the normalized volume of $\Delta$ is a prime number, there exists a sequence of integers $A=(a_1,\ldots,a_{d-1},p)$ with $1 \leq a_1,\ldots,a_{d-1} \leq p$ such that $\Delta$ is unimodularly equivalent to $\Delta(A)$.
	Let $a_0$ be an integer with $1 \leq a_0 \leq p$ such that $p \mid (a_0+\cdots+a_{d-1}+1)$.
	Since $\Delta$ is not a lattice pyramid over any lower-dimensional simplex, by Lemma \ref{onelem}, 
	one has $1 \leq a_0,\ldots,a_{d-1}<p$.
	Hence, by Theorem \ref{one}, $\Delta(A)$ is Gorenstein of index $r$ if and only if
	$a_0=\cdots=a_{d-1}=1$ and $d=rp-1$.
	Therefore,  $\Delta$ is Gorenstein of index $r$ if and only if $d=rp-1$ and $\Lambda_{\Delta}$ is generated by $\left(\dfrac{1}{p},\ldots,\dfrac{1}{p}\right)$.
\end{proof}

\section{The case when $\text{Vol}(\Delta)=p^2$ or $\text{Vol}(\Delta)=pq$}
Let $s,d$ be positive integers with $1 \leq s<d$, and let $A=(a_1,\ldots,a_s)$ and $B=(b_1,\ldots,b_d)$ be sequences of integers with $0 \leq a_1,\ldots,a_{s-1} < a_s$ and $0 \leq b_1,\ldots,b_{d-1} < b_{d}$.
Set  $\Delta(A,B)=\text{conv}(v_0,\ldots,v_d) \subset \RR^d$, where 
\begin{displaymath}
v_i=\left\{
\begin{aligned}
&(0,\ldots ,0),& \ \textnormal{if}& \ i=0,\\
&e_i,&\ \textnormal{if} &\  1 \leq i \leq s-1, \\
&\sum\limits_{j=1}^{s} a_je_j, &\ \textnormal{if} &\  i=s,\\
&e_i,&\ \textnormal{if} &\  s+1 \leq i \leq d-1, \\
&\sum\limits_{j=1}^{d} b_je_j, &\ \textnormal{if} &\  i=d.
\end{aligned}
\right.
\end{displaymath}
Then $\Delta(A,B)$ is a lattice simplex arising from a Hermite normal form matrix with two nonstandard rows.

 We give the equations of supporting hyperplanes of facets of $\Delta(A,B)$.
\begin{Lemma}
			\label{fac2}
			Assume that $b_s=0$.
	For $0 \leq i \leq d$, let $\Fc_i$ be the facet of $\Delta(A,B)$ whose vertices are $v_0,\ldots,v_{i-1},v_{i+1},\ldots,v_d$ 
	and $\Hc_i$ the supporting hyperplane of $\Fc_i$.
	Then one has
	\begin{itemize}
		\item $\Hc_0=\{(x_1,\ldots,x_d) \in \RR^d :a_sb_d\sum\limits_{\substack{1 \leq j \leq d-1 \\ j \neq s}}x_j+b_d(1-\sum\limits_{1 \leq j \leq s-1}a_j)x_s+a_s(1-\sum\limits_{\substack{1 \leq j \leq d-1 \\ j \neq s}}b_j)x_d=a_sb_d \}$;
		\item $\Hc_i=\{(x_1,\ldots,x_d) \in \RR^d :-a_sb_dx_i+a_ib_dx_s+a_sb_ix_d=0 \}$, $1 \leq i \leq s-1$;
		\item $\Hc_s=\{(x_1,\ldots,x_d) \in \RR^d : -x_{s}=0 \}$;
		\item $\Hc_i=\{(x_1,\ldots,x_d) \in \RR^d :-b_dx_i+b_ix_d=0 \}$, $s+1 \leq i \leq d-1$;
		\item $\Hc_d=\{(x_1,\ldots,x_d) \in \RR^d : -x_{d}=0 \}$.
	\end{itemize}
\end{Lemma}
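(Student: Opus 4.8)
The plan is to verify the five displayed equations by direct substitution, using the elementary fact that in a $d$-dimensional simplex any $d$ of the $d+1$ vertices are affinely independent. Consequently each $\Fc_i$ is a genuine facet, its affine hull $\Hc_i$ is a hyperplane, and a candidate linear equation defines $\Hc_i$ as soon as its coefficient vector is nonzero and all $d$ vertices $v_j$ with $j \neq i$ satisfy it. Thus the whole proof reduces to plugging the relevant vertices into the claimed equations and checking that the excluded vertex $v_i$ fails them.

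First I would dispose of the three simplest hyperplanes. For $\Hc_d$, every vertex $v_0, \ldots, v_{d-1}$ has vanishing last coordinate (in particular $v_s$ does, since $s < d$), so $-x_d = 0$ holds on $\Fc_d$, whereas $v_d$ has last coordinate $b_d \geq 1$. For $\Hc_s$, the equation $-x_s = 0$ holds at every vertex except $v_s$; the only nonobvious point is $v_d$, whose $s$th coordinate is $b_s$, and this is where the hypothesis $b_s = 0$ enters, while $v_s$ itself has $s$th coordinate $a_s \geq 1$. For $\Hc_i$ with $s+1 \leq i \leq d-1$, the equation $-b_d x_i + b_i x_d = 0$ vanishes on the origin, on all unit-vector vertices and on $v_s$ (each having both $i$th and last coordinates equal to $0$), and on $v_d$ since $-b_d b_i + b_i b_d = 0$; meanwhile $v_i = e_i$ gives $-b_d \neq 0$.

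Next I would handle the two families carrying nontrivial coefficients, where $b_s = 0$ again does the essential work. For $\Hc_i$ with $1 \leq i \leq s-1$, substituting $v_s = \sum_{j=1}^{s} a_j e_j$ into $-a_s b_d x_i + a_i b_d x_s + a_s b_i x_d$ produces $-a_s b_d a_i + a_i b_d a_s = 0$, and substituting $v_d$ produces $-a_s b_d b_i + a_i b_d b_s + a_s b_i b_d$, which vanishes exactly because $b_s = 0$; all other vertices contribute $0$ coordinatewise. For the facet opposite the origin I would verify $\Hc_0$ last: each unit-vector vertex $e_i$ with $i \neq s$ and $i \leq d-1$ contributes only the leading term $a_s b_d$, matching the right-hand side, while the two long vertices $v_s$ and $v_d$ both evaluate to $a_s b_d$ after cancellation. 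For $v_s$ the cancellation rests on the identity $\sum_{1 \leq j \leq s-1} a_j + \bigl(1 - \sum_{1 \leq j \leq s-1} a_j\bigr) = 1$; for $v_d$ it uses $b_s = 0$ to annihilate the middle term together with the analogous identity for the truncated sum $\sum_{1 \leq j \leq d-1,\, j \neq s} b_j$.

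The step I expect to cost the most care is the $\Hc_0$ computation, since it is the only equation with a nonzero coefficient on every coordinate and the only one forcing the two truncated sums over the $a_j$ and the $b_j$ to be bookkept at once. Apart from that, the entire argument is a matter of tracking which coordinates of the long vertices $v_s$ and $v_d$ are nonzero and invoking $b_s = 0$ at the three places where a stray term would otherwise survive.
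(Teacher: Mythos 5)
Your verification is correct: the paper states this lemma without proof, treating it as a routine computation, and your direct substitution of each vertex into the claimed equations (with the hypothesis $b_s=0$ invoked exactly where the $s$th coordinate of $v_d$ would otherwise contribute) is precisely that computation carried out in full. All five cases check out, including the bookkeeping of the truncated sums in the $\Hc_0$ case.
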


Let $p,q$ be prime numbers with $p \neq q$.
In this section, we characterize Gorenstein simplices whose normalized volume equals $p^2$ and $pq$.
In particular, we prove Theorems \ref{pp} and \ref{pq}.

We prove the following lemma.
\begin{Lemma}
	\label{lempq}
Let $p$ and $q$ be prime numbers
	and set $a_s=p$ and $b_d=q$.
	Suppose that $\Delta(A,B)$ is Gorenstein of index $r$.
	Then we have $b_s=0$ or $b_s=q-1$.
	Moreover, if $b_s=q-1$, then there exists a sequence of integers $C=(c_1,\ldots,c_{d-1},pq)$ with $1 \leq c_1,\ldots,c_{d-1} \leq pq$ such that $\Delta(A,B)$ and $\Delta(C)$ are unimodularly equivalent.
\end{Lemma}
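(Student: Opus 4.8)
The plan is to isolate the one facet of $\Delta(A,B)$ whose supporting hyperplane genuinely involves the entry $b_s$ — namely $\Fc_s$, obtained by deleting $v_s$ — and to force its reflexivity through Lemma \ref{facet}. Since $\Delta(A,B)$ is Gorenstein of index $r$, let $t=(t_1,\dots,t_d)$ be the unique interior lattice point of $r\Delta(A,B)$ and put $\Delta'=r\Delta(A,B)-t$, a reflexive simplex. I would first record, \emph{without} the hypothesis $b_s=0$ of Lemma \ref{fac2}, the hyperplane $\Hc_d$ of $\Fc_d$: every vertex other than $v_d$ has vanishing last coordinate, so $\Hc_d=\{x_d=0\}$, whence the corresponding facet of $\Delta'$ lies on $\{-x_d=t_d\}$ and Lemma \ref{facet} gives $-e_d/t_d\in\ZZ^d$, i.e. $t_d=1$, exactly as in the proof of Theorem \ref{one}.

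Next I would compute $\Hc_s$ directly. After deleting $v_s$, the remaining vertices $0,e_1,\dots,e_{s-1},e_{s+1},\dots,e_{d-1}$ force any normal $(c_1,\dots,c_d)$ to satisfy $c_k=0$ for $k\neq s,d$, while $v_d=\sum_j b_je_j$ imposes $qc_s+b_sc_d=0$ (recall $b_d=q$); hence $\Hc_s=\{qx_s-b_sx_d=0\}$, and the value $qp>0$ at $v_s$ confirms it is a genuine supporting hyperplane. Passing to $\Delta'$ and using $t_d=1$, the facet $\Fc_s$ of $\Delta'$ lies on $\{-qx_s+b_sx_d=qt_s-b_s\}$, so by Lemma \ref{facet} its dual vertex is $\tfrac{1}{qt_s-b_s}(-qe_s+b_se_d)$. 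Reflexivity of $\Delta'$ then demands $(qt_s-b_s)\mid q$ and $(qt_s-b_s)\mid b_s$, i.e. $(qt_s-b_s)\mid\gcd(q,b_s)$. Interiority gives $qt_s-b_s>0$ and $t_s\ge 1$, so $qt_s-b_s\ge q-b_s\ge 1$. As $q$ is prime and $0\le b_s<q$, either $b_s=0$, or $\gcd(q,b_s)=1$ and then $qt_s-b_s=1$ forces $t_s=1$ and $b_s=q-1$. This is precisely $b_s\in\{0,q-1\}$.

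For the second assertion, suppose $b_s=q-1$. The key observation is that then the element $g\in\Lambda_{\Delta(A,B)}$ with $g_d=\tfrac1q$ and $g_s=\tfrac1{pq}$ already lies in the group, because $p\cdot\tfrac1{pq}+(q-1)\cdot\tfrac1q=1\in\ZZ$ is exactly the congruence that membership imposes in the $s$th coordinate; the remaining coordinates of $g$ are then uniquely determined in $\RR/\ZZ$ by the other defining congruences. Since $g_s=\tfrac1{pq}$ has order $pq=\text{Vol}(\Delta(A,B))=|\Lambda_{\Delta(A,B)}|$, the group $\Lambda_{\Delta(A,B)}$ is cyclic and generated by $g$. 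Reordering the vertices so that $v_s$ comes last turns $g$ into a generator of the shape $\left(\tfrac{c_0}{pq},\dots,\tfrac{c_{d-1}}{pq},\tfrac1{pq}\right)$; setting $c_i=pq\,g_i$ when $g_i\neq 0$ and $c_i=pq$ when $g_i=0$ yields integers $1\le c_1,\dots,c_{d-1}\le pq$, and by Lemma \ref{onelem} the group $\Lambda_{\Delta(C)}$ with $C=(c_1,\dots,c_{d-1},pq)$ is generated by exactly this element (the divisibility $pq\mid(c_0+\dots+c_{d-1}+1)$ holding because $g$ lies in $\Lambda$). Thus $\Lambda_{\Delta(C)}$ coincides with $\Lambda_{\Delta(A,B)}$ after the reordering, and the bijection of \cite{BH} identifies $\Delta(A,B)$ and $\Delta(C)$ up to unimodular equivalence.

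The main obstacle, and the only genuinely new input beyond the one-row analysis of Section 2, is the first part: one cannot invoke Lemma \ref{fac2} because it presupposes $b_s=0$, so $\Hc_s=\{qx_s-b_sx_d=0\}$ must be derived from scratch, and the decisive point is recognizing that the single divisibility $(qt_s-b_s)\mid\gcd(q,b_s)$, combined with the primality of $q$ and the bound $b_s<q$, already eliminates every intermediate value of $b_s$. The reduction to the one-row case is comparatively soft, resting only on the fact that $b_s=q-1$ makes $\tfrac1{pq}$ appear as a coordinate of a group element, which forces cyclicity of $\Lambda_{\Delta(A,B)}$.
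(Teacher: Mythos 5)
Your proof is correct and follows essentially the same route as the paper: derive the supporting hyperplanes of the facets opposite $v_d$ and $v_s$, use Lemma \ref{facet} and reflexivity to force $t_d=t_s=1$ and $(qt_s-b_s)\mid q$, hence $b_s\in\{0,q-1\}$, and for $b_s=q-1$ exhibit the order-$pq$ element of $\Lambda_{\Delta(A,B)}$ with $s$-th coordinate $\tfrac{1}{pq}$ and conclude via Lemma \ref{onelem} and the bijection of \cite{BH}. Only a cosmetic slip: the condition imposed by $v_d$ on the normal $(c_1,\ldots,c_d)$ of $\Fc_s$ is $b_sc_s+qc_d=0$, not $qc_s+b_sc_d=0$, though the hyperplane $\{qx_s-b_sx_d=0\}$ you deduce from it is the correct one.
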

\begin{proof}
	The following two equations define supporting hyperplanes of two facets of $r\Delta(A,B)$:
	\begin{itemize}
		\item $-x_{d}=0$;
		\item $-qx_{s}+b_sx_d=0$.
	\end{itemize}
	Let $t=(t_1,\ldots,t_d) \in \RR^d$ be the unique interior lattice point of $r\Delta(A,B)$.
	Then $t_i \geq 1$ for each $i$.
	Set $\Delta=r\Delta(A,B)-t$.
	Then the followings are equations of supporting hyperplanes of facets of $\Delta$:
	\begin{itemize}
		\item $-x_{d}=t_d$;
		\item $-qx_{s}+b_sx_d=qt_s-b_st_d$.
	\end{itemize}
	By Lemma \ref{facet}, $-\dfrac{e_d}{t_d}$ and $\dfrac{-qe_s+b_se_d}{qt_s-b_st_d}$ are vertices of $\Delta^{\vee}$.
	Hence since $\Delta$ is reflexive, we know that $t_d=1$ and $\dfrac{q}{qt_s-b_s}$ is an integer.
	Therefore, we have $t_s=1$ and $b_s \in \{0,q-1\}$.
	
	 Suppose that $b_s=q-1$. 
	Then we know
	$$\left(\dfrac{\lambda_0}{pq}, \dfrac{pq-a_1-pb_1}{pq},\ldots,\dfrac{pq-a_{s-1}-pb_{s-1}}{pq},\dfrac{1}{pq},\dfrac{q-b_{s+1}}{q},\ldots,\dfrac{q-b_{d-1}}{q},\dfrac{1}{q} \right)$$
	is an element of $\Lambda_{\Delta(A,B)}$,
	where $\lambda_0$ is an integer with $0 \leq \lambda_0 \leq pq-1$ such that the sum of all entries of this element is an integer.  
	Hence by Lemma \ref{onelem}, 
	there exists a sequence of integers $C=(c_1,\ldots,c_{d-1},pq)$ with $1 \leq c_1,\ldots,c_{d-1} \leq pq$ such that $\Delta(A,B)$ and $\Delta(C)$ are unimodularly equivalent.
\end{proof}

At first, we characterize Gorenstein simplices with normalized volume $p^2$.
In order to prove Theorem \ref{pp},  we show the following lemma.
\begin{Lemma}
	\label{ppver}
	Let $p$ be a prime number and set $a_s=b_d=p$
	and  
	$b_s=0$.
	Suppose that $d=rp-1$ and for $1 \leq i \leq s-1$, $a_i+b_i=p-1$ and for $s+1 \leq i \leq d-1$,  $b_i=p-1$. 
	Then $\Delta(A,B)$ is Gorenstein of index $r$.
	Moreover, the vertices of the associated dual reflexive simplex are the following lattice points:
	\begin{itemize}
		\item $-e_d$;
		\item $-pe_i+a_ie_s+b_ie_d$ for $1 \leq i \leq s-1$;
		\item $-e_s$;
		\item $-pe_i+b_ie_d$ for $s+1 \leq i \leq d-1$;
		\item $p\sum\limits_{\substack{1 \leq j \leq d-1 \\ j \neq s}}e_j+(1-\sum\limits_{1 \leq j \leq s-1}a_j)e_s+(1-\sum\limits_{\substack{1 \leq j \leq d-1 \\ j \neq s}}b_j)e_d$.
	\end{itemize}
\end{Lemma}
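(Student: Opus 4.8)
The plan is to follow the proof of Lemma \ref{ver} in structure: exhibit an explicit interior lattice point $t$ of $r\Delta(A,B)$, translate by $-t$, and then read off from Lemma \ref{fac2} that every translated facet hyperplane has the form $\langle a,x\rangle=1$ with $a$ integral, so that Lemma \ref{facet} yields simultaneously the reflexivity of the translate and the vertices of its dual. Throughout I write $\sum_{j\neq s}$ for the sum over $1\le j\le d-1$ with $j\neq s$.

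First I would verify that $t=(1,\ldots,1)$ is an interior lattice point of $r\Delta(A,B)$. Substituting $a_s=b_d=p$ and $b_s=0$ into Lemma \ref{fac2}, I evaluate the five families of facet normals at $t$: the facets $\Hc_s$ and $\Hc_d$ are immediate, the facets $\Hc_i$ with $s+1\le i\le d-1$ use $b_i=p-1$, the facets $\Hc_i$ with $1\le i\le s-1$ use $a_i+b_i=p-1$, and $\Hc_0$ uses $d=rp-1$; in each case the value at $t$ is strictly on the interior side. Next set $\Delta=r\Delta(A,B)-t$ and translate the hyperplanes. The four ``standard'' families have reduced normals $-e_d$, $-e_s$, $-pe_i+b_ie_d$ (for $s+1\le i\le d-1$) and $-pe_i+a_ie_s+b_ie_d$ (for $1\le i\le s-1$), each of which pairs to $-1$ with $t$ exactly because $b_i=p-1$, respectively $a_i+b_i=p-1$. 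Hence each translated equation reads $\langle a,x\rangle=1$ with $a$ integral, and by Lemma \ref{facet} these give the first four claimed vertices of $\Delta^{\vee}$.

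The hard part will be the facet $\Fc_0$. Its normal from Lemma \ref{fac2} is $a_0=p^2\sum_{j\neq s}e_j+p(1-\sum_{j=1}^{s-1}a_j)e_s+p(1-\sum_{j\neq s}b_j)e_d$ with right-hand side $rp^2$ for $r\Delta(A,B)$, so a priori it is not clear that the dual vertex is integral. The crucial computation is the identity $\sum_{j=1}^{s-1}a_j+\sum_{j\neq s}b_j=(d-2)(p-1)$, obtained by summing $a_i+b_i=p-1$ over $1\le i\le s-1$ and $b_i=p-1$ over $s+1\le i\le d-1$. This forces $\langle a_0,t\rangle=pd$, and since $d=rp-1$ gives $pd=rp^2-p$, so the translated equation becomes $\langle a_0,x\rangle=p$. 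Dividing by $p$ (not by $p^2$) produces the integral normal $a_0/p=p\sum_{j\neq s}e_j+(1-\sum_{j=1}^{s-1}a_j)e_s+(1-\sum_{j\neq s}b_j)e_d$, which is precisely the last listed vertex.

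With every facet normal of $\Delta$ thereby shown to be integral, Lemma \ref{facet} gives that $\Delta$ is reflexive. Since $\Delta=r\Delta(A,B)-t$ with $-t\in\ZZ^d$, this means $\Delta(A,B)$ is Gorenstein of index $r$, and the explicit normals computed above are exactly the vertices of the associated dual reflexive simplex $\Delta^{\vee}$, completing the argument. The only genuine subtlety is the $\Fc_0$ step, where the combinatorial identity makes the coefficient of $p^2$ collapse so that division by $p$ suffices; the remaining facets are routine once the hypotheses $a_i+b_i=p-1$ and $b_i=p-1$ are plugged in.
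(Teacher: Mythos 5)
Your proposal is correct and follows essentially the same route as the paper: exhibit $t=(1,\ldots,1)$ as an interior lattice point of $r\Delta(A,B)$ via the hyperplane descriptions of Lemma \ref{fac2}, translate by $-t$, check that every facet equation becomes $\langle a,x\rangle=1$ with $a$ integral (the $\Fc_0$ normal after dividing out one factor of $p$, using $\sum_{j=1}^{s-1}a_j+\sum_{j\neq s}b_j=(d-2)(p-1)$ and $d=rp-1$), and conclude by Lemma \ref{facet}. The paper states the translated equations directly with the factor of $p$ already cancelled, so your explicit verification of that cancellation is just a more detailed writing of the same step.
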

\begin{proof}
	Since $$p(d-2)+(1-\sum\limits_{1 \leq j \leq s-1}a_j)+(1-\sum\limits_{\substack{1 \leq j \leq d-1 \\ j \neq s}}b_j)=d=rp-1 <rp,$$
	by Lemma \ref{fac1}, we know that $t=(1,\ldots,1)$ is an interior lattice point of $r\Delta(A,B)$.
	Set $\Delta=r\Delta(A,B)-t$.
	Then by Lemma \ref{fac1}, 
	the equations of supporting hyperplanes of facets of $\Delta$ are as follows:
	\begin{itemize}
		\item $-x_{d}=1$;
		\item $-px_i+a_ix_s+b_ix_d=1$, $1 \leq i \leq s-1;$
		\item $-x_{s}=1$;
		\item $-px_i+b_ix_d=1$, $s+1 \leq i \leq d-1;$
		\item$p\sum\limits_{\substack{1 \leq j \leq d-1 \\ j \neq s}}x_j+(1-\sum\limits_{1 \leq j \leq s-1}a_j)x_s+(1-\sum\limits_{\substack{1 \leq j \leq d-1 \\ j \neq s}}b_j)x_d=1$.
	\end{itemize}
Hence by Lemma \ref{facet}, $\Delta$ is reflexive and we can obtain the vertices of $\Delta^{\vee}$.
\end{proof}

Now, we prove Theorem \ref{pp}.
	\begin{proof}[Proof of Theorem \ref{pp}]
		First notice that, by Theorem \ref{one}, the case of Hermite normal form matrices with one nonstandard row are captured in the statement (1).
		Hence,  we consider the case of Hermite normal form matrices with two nonstandard rows.
		Let $s,d$ be positive integers with $s<d$,
		and let $A=(a_1,\ldots,a_{s-1},p)$ and $B=(b_1,\ldots,b_{d-1},p)$ be sequences of integers with $0 \leq a_1,\ldots,a_{s-1} , b_1,\ldots,b_{d-1} < p$.
		Assume that $\Delta(A,B)$ is not a lattice pyramid over any lower-dimensional lattice simplex and $\Delta(A,B)$ is Gorenstein of index $r$.
		Then for $1 \leq i \leq s-1$, we have $(a_i,b_i)\neq (0,0)$ and for $s+1 \leq i \leq d-1$, we have $b_i\neq 0$.
		By Lemma \ref{lempq}, we only need to consider the case where $b_s=0$.
			If for some $1 \leq i \leq s-1$, $a_{i}=0$, then $\Delta(A,B)$ is unimodularly equivalent to $\Delta(A',B')$, where $A'=(a_1,\ldots,a_{i-1}, a_{i+1},\ldots,a_{s-1},p)$ and $B'=(b_1,\ldots,b_{i-1},b_{i+1},\ldots,b_{s-1},0,b_{i},b_{s+1},\ldots,b_{d-1},p)$.
		Hence we may assume that $a_1,\ldots,a_{s-1} \geq 1$.
		Let $t=(t_1,\ldots,t_d) \in \RR^d$ be the unique interior lattice point of $r\Delta(A,B)$,
		and set $\Delta'=r\Delta(A,B)-t$.
		Then by Lemma \ref{fac2}, the equations of supporting hyperplanes of facets of $\Delta'$ are as follows:
		\begin{itemize}
			\item $-x_{d}=t_d$;
			\item $-px_i+a_ix_s+b_ix_d=pt_i-a_it_s-b_it_d$, $1 \leq i \leq s-1;$
			\item $-x_{s}=t_s$;
			\item $-px_i+b_ix_d=pt_i-b_it_d$, $s+1 \leq i \leq d-1;$
			\item$p\sum\limits_{\substack{1 \leq j \leq d-1 \\ j \neq s}}x_j+(1-\sum\limits_{1 \leq j \leq s-1}a_j)x_s+(1-\sum\limits_{\substack{1 \leq j \leq d-1 \\ j \neq s}}b_j)x_d\\=rp-p\sum\limits_{\substack{1 \leq j \leq d-1 \\ j \neq s}}t_j-(1-\sum\limits_{1 \leq j \leq s-1}a_j)t_s-(1-\sum\limits_{\substack{1 \leq j \leq d-1 \\ j \neq s}}b_j)t_d$.
		\end{itemize}
		Hence by Lemma \ref{facet}, it is known that $-e_d/t_d$ and $-e_s/t_s$ are vertices of $(\Delta')^{\vee}$.
		Therefore, since $\Delta'$ is reflexive, we obtain $t_s=t_d=1$.
		Similarly, since $pt_i-a_i-b_i >0$ and $pt_i-a_i-b_i$ divides $p,a_i$ and $b_i$, and since $(a_i,b_i) \neq (0,0)$, we have that $pt_i-a_i-b_i=1$.
		Hence, for any $1 \leq i \leq s-1$, we have $t_i=1$ and $p-a_i-b_i=1$.
		Moreover, since  $b_i \neq 0$ for any $s+1 \leq i \leq d-1$, we have that $t_i=1$ and $p-b_i=1$.
		We then obtain
		$$rp-p\sum\limits_{\substack{1 \leq j \leq d-1 \\ j \neq s}}t_j-(1-\sum\limits_{1 \leq j \leq s-1}a_j)-(1-\sum\limits_{\substack{1 \leq j \leq d-1 \\ j \neq s}}b_j)
		=rp-d.$$
		Since $rp-d >0$ and $rp-d$ divides $p$, we have $rp-d=1$ or $rp-d=p$.
		
		
		Assume that $rp-d=p$.
		Then  since $p \mid (1-\sum_{1 \leq j \leq s-1}a_j)$,
		we know that $\Lambda_{\Delta(A,B)}$ is generated by 
		$$\left(0,\dfrac{a_1+1}{p},\ldots,\dfrac{a_{s-1}+1}{p},0,\dfrac{1}{p},\ldots,\dfrac{1}{p}\right)$$ and $$\left(0,\dfrac{p-a_1}{p},\ldots,\dfrac{p-a_{s-1}}{p},\dfrac{1}{p},0,\ldots,0\right).$$
		Therefore, by Lemma \ref{Nill}, $\Delta(A,B)$ is a lattice pyramid over a lower-dimensional lattice simplex.
		Thus one has $rp-d=1$.
		 Then it follows that 
		 $\Lambda_{\Delta(A,B)}$ is generated by 
		 $$\left(\dfrac{2-\sum\limits_{1 \leq i \leq s-1}a_i}{p},\dfrac{a_1+1}{p},\ldots,\dfrac{a_{s-1}+1}{p},0,\dfrac{1}{p},\ldots,\dfrac{1}{p}\right)$$ and $$\left(\dfrac{\left(\sum\limits_{1 \leq i \leq s-1}a_i\right)-1}{p},\dfrac{p-a_1}{p},\ldots,\dfrac{p-a_{s-1}}{p},\dfrac{1}{p},0,\ldots,0\right).$$
		 By Lemma \ref{ppver}, this completes the proof.
	\end{proof}
	
	Next, we characterize Gorenstein simplices with normalized volume $pq$.
In order to prove Theorem \ref{pq}, we show the following lemma.
\begin{Lemma}
	\label{pqver}
Let $p$ and $q$ be prime numbers with $p \neq q$
	and set $a_s=p$ and $b_d=q$.
	Assume that $k=rpq-p(d-s)-qs \in \{p,q\}$.
	Then $\Delta(A,B)$ is Gorenstein of index $r$. 
	Moreover, the vertices of the associated dual reflexive simplex are the following lattice points:
	 \begin{itemize}
	\item $-e_d$;
	\item $-pe_i+(p-1)e_{s}$ for $1 \leq i \leq s-1$;
	\item $-e_{s}$;  
	\item $-qe_i+(q-1)e_d$ for  $s+1 \leq i \leq d-1$; 
	\item $(c_1,\ldots,c_d)$,
\end{itemize}
where
\begin{displaymath}
c_i=\left\{
\begin{aligned}
&\dfrac{q(1-(s-1)(p-1))}{k},& \ \textnormal{if}& \ i=s,\\
&\dfrac{p(1-(d-s-1)(q-1)}{k},& \ \textnormal{if}& \ i=d,\\
&\dfrac{pq}{k},& \textnormal{o}& \textnormal{therwise}.\\
\end{aligned}
\right.
\end{displaymath} 
\end{Lemma}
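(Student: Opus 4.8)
The plan is to follow the pattern of Lemmas~\ref{ver} and~\ref{ppver} verbatim: I will exhibit $t=(1,\ldots,1)$ as the interior lattice point of $r\Delta(A,B)$, pass to the translate $\Delta=r\Delta(A,B)-t$, compute the supporting hyperplanes of its facets via Lemma~\ref{fac2}, and then read off the vertices of $\Delta^{\vee}$ using Lemma~\ref{facet}. Note first that the displayed dual vertices pin down the remaining entries of $A$ and $B$ as $a_1=\cdots=a_{s-1}=p-1$, $b_1=\cdots=b_s=0$ and $b_{s+1}=\cdots=b_{d-1}=q-1$ (together with the given $a_s=p$, $b_d=q$); these are the values I use throughout, and in particular $b_s=0$ is exactly what makes Lemma~\ref{fac2} applicable.

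For the interiority step the only nontrivial facet is $\Hc_0$. Substituting $t=(1,\ldots,1)$ into the $\Hc_0$-equation of Lemma~\ref{fac2}, the $pq$-terms cancel and one is left with
\[
pq(d-2)+q\bigl(1-(s-1)(p-1)\bigr)+p\bigl(1-(d-s-1)(q-1)\bigr)=qs+p(d-s)=rpq-k.
\]
Since $k\in\{p,q\}$ we have $k>0$, so this value is strictly less than the right-hand side $rpq$ of the corresponding facet equation of $r\Delta(A,B)$; the remaining facet inequalities hold strictly at $t$ by inspection. Hence $t$ is an interior lattice point, and the reflexivity of $\Delta=r\Delta(A,B)-t$ established below will force it to be the unique one.

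Translating the equations of Lemma~\ref{fac2} by $-t$ turns every facet right-hand side into a positive integer (the four families become, after division, $-e_d$, $-e_s$, $-pe_i+(p-1)e_s$ and $-qe_i+(q-1)e_d$), and Lemma~\ref{facet} then produces the candidate vertices of $\Delta^{\vee}$. These four families are manifestly lattice points, so the entire statement reduces to checking that the last vertex $(c_1,\ldots,c_d)=\tfrac1k(\,\cdots)$ coming from $\Hc_0$ is integral.

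This integrality is the crux, and it is exactly where the hypothesis $k\in\{p,q\}$ is consumed. I split into two cases. If $k=p$, the defining relation $k=rpq-p(d-s)-qs$ rearranges to $qs=p\bigl(rq-(d-s)-1\bigr)$, so $p\mid qs$; since the numerator of $c_s$ equals $q\bigl(1-(s-1)(p-1)\bigr)=qs-pq(s-1)$ it is divisible by $p=k$, the numerator of $c_d$ is visibly divisible by $p$, and $c_j=pq/p=q$ for the remaining coordinates. The case $k=q$ is symmetric, using $q\mid p(d-s)$. In either case $(c_1,\ldots,c_d)\in\ZZ^d$, so $\Delta^{\vee}$ is a lattice polytope; as $\Delta$ is itself a lattice simplex with the origin in its interior, it is reflexive. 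Therefore $\Delta=r\Delta(A,B)-t$ witnesses that $\Delta(A,B)$ is Gorenstein of index $r$, and the vertices of $\Delta^{\vee}$ are precisely those listed. I expect the main obstacle to be purely this number-theoretic integrality of the final vertex; everything else is the formal translate-and-dualize argument already used in Lemmas~\ref{ver} and~\ref{ppver}.
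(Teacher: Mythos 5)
Your proposal is correct and follows essentially the same route as the paper's own proof: exhibit $t=(1,\ldots,1)$ as an interior lattice point via the identity $pq(d-2)+q(1-(p-1)(s-1))+p(1-(q-1)(d-s-1))=p(d-s)+qs<rpq$, translate, read off the facet equations from Lemma~\ref{fac2}, and reduce reflexivity to the divisibility of the $\Hc_0$-coefficients by $k$, which you (like the paper) extract from $k=p\Rightarrow p\mid s$ and $k=q\Rightarrow q\mid(d-s)$. Your explicit identification of the implicit values $a_i=p-1$, $b_i\in\{0,q-1\}$ and the case-by-case integrality check match the paper's argument in substance.
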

\begin{proof}
Since
	$pq(d-2)+q(1-(p-1)(s-1))+p(1-(q-1)(d-s-1))=p(d-s)+qs < rpq$,
	by Lemma \ref{fac2}, it follows that $t=(1,\ldots,1) \in \ZZ^d$ is an interior lattice point of $r\Delta(A,B)$.
	Hence by Lemma \ref{fac2}, the equations of supporting hyperplanes of facets of $\Delta'=r\Delta(A,B)-t$ are as follows:
		\begin{itemize}
		\item $-x_{d}=1$;
		\item $-pqx_i+(p-1)qx_s=1$, $1 \leq i \leq s-1;$
		\item $-x_{s}=1$;
		\item $-qx_i+(q-1)x_d=1$, $s+1 \leq i \leq d-1;$
		\item$pq\sum\limits_{\substack{1 \leq j \leq d-1 \\ j \neq s}}x_j+q(1-(p-1)(s-1))x_s+p(1-(q-1)(d-s-1)x_d\\=rpq-p(d-s)-qs$.
	\end{itemize}
	    If $rpq-p(d-s)-qs=p$, then $p \mid s$. Hence, $p \mid (1-(p-1)(s-1))$.
	Moreover,  if $rpq-p(d-s)-qs=q$, then $q \mid (d-s)$, and so $q \mid (1-(d-s-1)(q-1)$.
	Thus by Lemma \ref{facet}, $\Delta'$ is reflexive and we can obtain the vertices of $(\Delta')^{\vee}$.
	\end{proof}
Now, we prove Theorem \ref{pq}.
\begin{proof}[Proof of Theorem \ref{pq}]
		The case when $s_3 \geq 1$ follows from Theorem \ref{one} since this case corresponds to the Hermite normal form matrices with one nonstandard row.
	Hence,  we consider the case of Hermite normal form matrices with two nonstandard rows.
	Let $s,d$ be positive integers with $s<d$ and $p,q$  prime numbers with $p \neq q$,
	and  let $A=(a_1,\ldots,a_{s-1},p)$ and $B=(b_1,\ldots,b_{d-1},q)$ be sequences of integers with $0 \leq a_1,\ldots,a_{s-1} < p$ and $0 \leq b_1,\ldots,b_{d-1} < q$.
	Assume that $\Delta(A,B)$ is not a lattice pyramid over any lower-dimensional lattice simplex and $\Delta(A,B)$ is Gorenstein  of index $r$.
	Then  for $1 \leq i \leq s-1$, we have $(a_i,b_i)\neq (0,0)$ and for $s+1 \leq i \leq d-1$, we have $b_i\neq 0$.
	By Lemma \ref{lempq}, we need only consider the case where $b_s=0$.
	
			Let $t=(t_1,\ldots,t_d) \in \RR^d$ be the unique interior lattice point of $r\Delta(A,B)$.
			Analogous to the proof in Theorem \ref{pp}, we have $t_i=1$ for each $i$
			and so we set $\Delta'=r\Delta(A,B)-t$.
			Then by Lemma \ref{fac2}, the equations of supporting hyperplanes of facets of $\Delta'$ are as follows:
			\begin{itemize}
				\item $-x_{d}=1$;
				\item $-pqx_i+a_iqx_s+pb_ix_d=pq-pb_i-a_iq$, $1 \leq i \leq s-1;$
				\item $-x_{s}=1$;
				\item $-qx_i+b_ix_d=q-b_i$, $s+1 \leq i \leq d-1;$
				\item$pq\sum\limits_{\substack{1 \leq j \leq d-1 \\ j \neq s}}x_j+q(1-\sum\limits_{1 \leq j \leq s-1}a_j)x_s+p(1-\sum\limits_{\substack{1 \leq j \leq d-1 \\ j \neq s}}b_j)x_d\\=rpq-pq(d-2)-q(1-\sum\limits_{1 \leq j \leq s-1}a_j)-p(1-\sum\limits_{\substack{1 \leq j \leq d-1 \\ j \neq s}}b_j)$.
			\end{itemize}
			Since $\Delta'$ is reflexive, by Lemma \ref{facet}, for $1 \leq i \leq s-1$ we have $pq-pb_i-a_iq \in \{1,p,q\}$ and for $s+1 \leq i \leq d-1$ we have $b_i=q-1$.
	       If for some $1 \leq i \leq s-1$, $pq-pb_{i}-a_{i}q=1$,
	       then since 
	       $$a=\left(\dfrac{\left(\sum\limits_{j=1}^{s-1}b_j\right)-(d-s)}{q},\dfrac{q-b_1}{q},\ldots,\dfrac{q-b_{s-1}}{q},0,\dfrac{1}{q},\ldots,\dfrac{1}{q} \right)$$
	    and 
	     $$b=\left(\dfrac{\left(\sum\limits_{j=1}^{s-1}a_j\right)-1}{p},\dfrac{p-a_1}{p},\ldots,\dfrac{p-a_{s-1}}{p},\dfrac{1}{p},0,\ldots,0 \right)$$
	     are elements of $\Lambda_{\Delta(A,B)}$,
	     we know that the $i$th entry of $a+b$ equals $\dfrac{1}{pq}$. Hence this is the case where $s_3 \geq 1$.
	     If for some $1 \leq i \leq s-1$, $pq-pb_{i}-a_{i}q=p$, then since $(a_{i},b_{i})=(0,q-1)$,
	     it follows that $\Delta(A,B)$ is unimodularly equivalent to $\Delta(A',B')$, where $A'=(a_1,\ldots,a_{i-1},a_{i+1}\ldots,a_{s-1},p)$ and $B'=(b_1,\ldots,b_{i-1},b_{i+1},\ldots,b_{s-1},0,b_{i},b_{s+1},\ldots,b_{d-1},q)$.
	      Hence we may assume that for any $1 \leq i \leq s-1$, we have that $pq-pb_i-a_iq=q$. In particular, $(a_i,b_i)=(p-1,0)$.
	     Then we know that 
	     an element $\left(
	     -\dfrac{p(d-s)+qs}{pq},\underbrace{\dfrac{1}{p},\ldots,\dfrac{1}{p}}_{s},\underbrace{\dfrac{1}{q},\ldots,\dfrac{1}{q}}_{d-s} \right)$ of $(\RR/\ZZ)^{d+1}$ generates  $\Lambda_{\Delta}$.
	     Moreover, we obtain 
	     $$1-\sum\limits_{1 \leq j \leq s-1}a_j=-p(s-1)+s,$$
       	$$1-\sum\limits_{\substack{1 \leq j \leq d-1 \\ j \neq s}}b_j=-q(d-s-1)+(d-s),$$
       	and
	     $$rpq-pq(d-2)-q(1-\sum\limits_{1 \leq j \leq s-1}a_j)-p(1-\sum\limits_{\substack{1 \leq j \leq d-1 \\ j \neq s}}b_j)=rpq-p(d-s)-qs.$$
	     Since $\Delta'$ is reflexive, by Lemma \ref{facet}, it follows that $rpq-p(d-s)-qs \in \{1,p,q,pq\}$.     
	     By Lemma \ref{Nill}, we know that $rpq-p(d-s)-qs \neq pq$. 
	      If $ rpq-p(d-s)-qs=1$, we have $\dfrac{-s}{p}+\dfrac{-d+s}{q}=\dfrac{-rpq+1}{pq}$.
	     Hence, this is again the case where $s_3 \geq 1$.
	     Therefore, we may just consider the case where $rpq-p(d-s)-qs \in \{p,q\}$. However, it is clear that this case satisfies the statement (2).
	     By Lemma \ref{pqver}, this completes the proof.
	   \end{proof}

   By Theorem \ref{one}, we can construct Gorenstein simplices whose normalized volume is equal $p^\ell$, where $p$ is a prime number and $\ell$ is a positive integer.
   Finally, we give other examples of Gorenstein simplices whose normalized volume equals $p^\ell$.
    These simplices arise from Hermite normal form matrices with $\ell$ nonstandard rows.  
   In particular, Theorem \ref{pp} $(2)$ is the motivation for the following theorem.
\begin{Theorem}
	\label{power}
Let $p$ be a prime number, and let $d$ and $\ell$ be positive integers with $\ell \leq d$, and let $1 \leq s_1< s_2 < \cdots < s_{\ell} = d$ be positive integers.
For $1 \leq i \leq k$ and $0 \leq j \leq d$,
we set 	
\begin{displaymath}
\RR/\ZZ \ni g_{ij}=\left\{
\begin{aligned}
&-\sum_{k=1}^{d}g_{ik},& \ \textnormal{if}& \ j=0,\\
&\dfrac{p-a_{ij}}{p},&\ \textnormal{if} &\  1 \leq j \leq s_i-1\ \textnormal{and\ } j \neq s_1,\ldots,s_{i-1}, \\
&\dfrac{1}{p}, &\ \textnormal{if} &\  j=s_i,\\
&0, &\ \textnormal{ot} &\textnormal{herwise},
\end{aligned}
\right.
\end{displaymath} 
where each $a_{ij}$ is  a positive integer with $1 \leq a_{ij} \leq p-1$.
Suppose that there exists  an integer $r$ with $d=rp-1$, and for $1 \leq j \leq d-1$ with $j \neq s_1,\ldots,s_{\ell}$, there exists a positive integer $t_j$ such that $\sum_{i}a_{ij}=t_jp-1$.
If $\Delta \subset \RR^d$ is a $d$-dimensional simplex such that $\Lambda_\Delta$ is generated by $(g_{10},\ldots,g_{1d}),\ldots,(g_{\ell 0},\ldots,g_{\ell d})$, 
then $\Delta$ is Gorenstein of index $r$ and $\textnormal{Vol}(\Delta)=p^\ell$.
\end{Theorem}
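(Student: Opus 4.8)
The plan is to follow the template of Lemmas~\ref{ver}, \ref{ppver} and \ref{pqver}: exhibit an explicit interior lattice point of $r\Delta$, translate it to the origin, and read off from the facet hyperplanes via Lemma~\ref{facet} that the translate is reflexive. First, by the correspondence of \cite{BH} the group $\Lambda_\Delta$ determines $\Delta$ up to unimodular equivalence once an ordering of the vertices is fixed, and both the property of being Gorenstein of index $r$ and the normalized volume are invariant under unimodular equivalence; so I may replace $\Delta$ by the explicit Hermite normal form representative with nonstandard rows in positions $s_1<\cdots<s_\ell$, namely $v_0=0$, $v_j=e_j$ for $j\notin\{s_1,\ldots,s_\ell\}$, and $v_{s_i}=p\,e_{s_i}+\sum_j a_{ij}e_j$, the last sum over those $j<s_i$ with $j\notin\{s_1,\ldots,s_{i-1}\}$ (this is a legitimate Hermite normal form since $1\le a_{ij}\le p-1<p$). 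A direct check shows that $(g_{i0},\ldots,g_{id})$ lies in $\Lambda_\Delta$, that reading off coordinate $s_m$ gives $g_{i,s_m}=\tfrac1p$ if $i=m$ and $0$ otherwise, and that each generator has order $p$. This staircase of pivots shows at once that $\Lambda_\Delta\cong(\ZZ/p\ZZ)^\ell$, whence $\text{Vol}(\Delta)=|\Lambda_\Delta|=p^\ell$.

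For the Gorenstein property I would compute the facet hyperplanes of $\Delta$ by the method of Lemmas~\ref{fac1} and \ref{fac2}, using that the matrix of $v_1,\ldots,v_d$ is lower triangular with diagonal $p$ in the nonstandard rows and $1$ elsewhere. Writing $\phi_k$ for the linear functional dual to $v_k$ (so $\phi_k(v_j)=\delta_{kj}$ for $1\le j\le d$), one finds $\phi_{s_m}(x)=\tfrac1p x_{s_m}$, and $\phi_k(x)=x_k-\sum_{i:\,s_i>k}\tfrac{a_{ik}}{p}x_{s_i}$ for a standard $k$, while the facet opposite the origin has functional $\phi_0=\sum_{k=1}^d\phi_k$. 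The correct interior lattice point is not $(1,\ldots,1)$ in general, but the point $t^\ast$ with $t^\ast_{s_m}=1$ for every nonstandard coordinate and $t^\ast_k=t_k$ for every standard coordinate $k$; since the $t_k$ are positive integers, $t^\ast\in\ZZ^d_{>0}$.

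I then set $\Delta'=r\Delta-t^\ast$ and verify reflexivity by checking via Lemma~\ref{facet} that every facet hyperplane of $\Delta'$ has an integral normal giving value $1$, i.e.\ that every vertex of $(\Delta')^\vee$ is a lattice point. The facet opposite $v_{s_m}$ produces the vertex $-e_{s_m}$, integral because $t^\ast_{s_m}=1$. For a standard $k$, the hypothesis $\sum_{i:\,s_i>k}a_{ik}=t_kp-1$ gives $\phi_k(t^\ast)=t_k-t_k+\tfrac1p=\tfrac1p$, so the facet sits at $\phi_k(x)=-\tfrac1p$ and produces the integral vertex $-p\,e_k+\sum_{i:\,s_i>k}a_{ik}e_{s_i}$. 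The decisive computation is the facet opposite $v_0$: interchanging the order of summation, and noting that $a_{mj}$ is never defined when $j\in\{s_1,\ldots,s_\ell\}$, one gets $\sum_m\sum_j a_{mj}=\sum_{j\notin\{s_1,\ldots,s_\ell\}}(t_jp-1)=p\sum_j t_j-(d-\ell)$, whence $\phi_0(t^\ast)=\tfrac dp$. Since $d=rp-1$, the facet of $\Delta'$ sits at $\phi_0(x)=r-\tfrac dp=\tfrac1p$, giving the integral vertex $p\sum_{k\notin\{s_1,\ldots,s_\ell\}}e_k+\sum_m(1-\sum_j a_{mj})e_{s_m}$. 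As all facet evaluations of $t^\ast$ lie strictly inside $r\Delta$ and every facet of $\Delta'$ then has the form $\langle w,x\rangle=1$ with $w\in\ZZ^d$, Lemma~\ref{facet} shows $\Delta'$ is reflexive, so $\Delta$ is Gorenstein of index $r$.

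The hard part is exactly this last facet. One must first guess the correct interior point $t^\ast$, whose standard coordinates are the $t_k$ rather than $1$, and then see that the two hypotheses conspire: $\sum_i a_{ij}=t_jp-1$ is precisely what clears the denominators on the standard coordinates, while $d=rp-1$ is precisely what forces $rp-\phi_0(t^\ast)p=1$ on the big facet. The bookkeeping of the nonstandard staircase, in particular the summation interchange that picks up only the standard coordinates, is the technical heart; for $\ell=2$ all of the resulting dual vertices specialize to those of Lemma~\ref{ppver}, which is a reassuring consistency check and the natural source of the guess.
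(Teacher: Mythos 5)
Your proposal is correct and takes essentially the same route as the paper: both pass to the explicit Hermite normal form representative, take the interior point with coordinate $1$ at each $s_k$ and $t_j$ at each standard coordinate $j$, and verify via Lemma \ref{facet} that every facet of the translate $r\Delta-t^{\ast}$ has an integral normal at height $1$, with the hypotheses $\sum_i a_{ij}=t_jp-1$ and $d=rp-1$ clearing the denominators on the small and large facets respectively. Your dual-basis bookkeeping and the explicit summation interchange for the facet opposite the origin are just a cleaner presentation of the same computation.
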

\begin{proof}
	Set $\Delta= \text{conv}(v_0,\ldots,v_d ) \subset \RR^d$, where
	\begin{displaymath}
	v_i=\left\{
	\begin{aligned}
	&(0,\ldots,0),& \ \textnormal{if}& \ i=0,\\
	&e_i,&\ \textnormal{if} & \  i \neq0, s_1,\ldots,s_{\ell}, \\
	&\sum_{\substack{1 \leq j < s_k \\ j \neq s_1,\ldots,s_{k-1}}}a_{ij}e_j+pe_{s_k}, &\ \textnormal{if} &\  i=s_k.\\
	\end{aligned}
	\right.
	\end{displaymath}
	Then $\Delta \subset \RR^d$ is a $d$-dimensional simplex such that $\text{Vol}(\Delta)=p^{\ell}$ and  $\Lambda_\Delta$ is generated by $(g_{10},\ldots,g_{1d}),\ldots,(g_{\ell 0},\ldots,g_{\ell d})$.
	Let $s_0=0$.
		Then the equations of supporting hyperplanes of facets of $r\Delta$ are as follows:
		\begin{itemize}
			\item $-x_{s_k}=0$, for $k=1,\ldots,\ell$;
			\item $-px_i+\sum_{j=k+1}^{\ell}a_{ji}x_{s_j}=0$, for $s_k < i < s_{k+1}$;
			\item $p\sum\limits_{\substack{1 \leq j < s_{\ell} \\ j \neq s_1,\ldots,s_{\ell-1}}}x_j+\sum\limits_{1\leq k \leq \ell}
			\left( \left(1-\sum\limits_{\substack{1 \leq j < s_k \\ j \neq s_1,\ldots,s_{k-1}}}a_{kj}\right)x_{s_k} \right)=rp$.
		\end{itemize}
		Let $t'=(t'_1,\ldots,t'_d)$ be a lattice point of $\RR^d$, where
			\begin{displaymath}
			t'_i=\left\{
			\begin{aligned}
			&1,& \ \textnormal{if}& \ i=s_1,\ldots,s_{\ell},\\
			&t_i,&\ \textnormal{if} & \  i \neq s_1,\ldots,s_{\ell}.\\
			\end{aligned}
			\right.
			\end{displaymath}
			Now, we claim $t'$ is an interior lattice point of $r\Delta$.
			Indeed,
			for $s_k < i < s_{k+1}$, we have
			$$-pt_i+\sum\limits_{j=k+1}^{\ell}a_{ji}=-1<0$$
			and
			$$p\sum\limits_{\substack{1 \leq j < s_{\ell} \\ j \neq s_1,\ldots,s_{\ell-1}}}t_j+\sum\limits_{1\leq k \leq \ell}
			\left( 1-\sum\limits_{\substack{1 \leq j < s_k \\ j \neq s_1,\ldots,s_{k-1}}}a_{kj} \right)=d=rp-1<rp.$$
		
			Now set $\Delta'=r\Delta-b$.
				Then the equations of supporting hyperplanes of facets of $\Delta'$ are as follows:
				\begin{itemize}
					\item $-x_{s_k}=1$, for $k=1,\ldots,\ell$;
					\item $-px_i+\sum\limits_{j=k+1}^{\ell}a_{ji}x_{s_j}=1$, for $s_k < i < s_{k+1}$;
					\item $p\sum\limits_{\substack{1 \leq j < s_{\ell} \\ j \neq s_1,\ldots,s_{\ell-1}}}x_j+\sum\limits_{1\leq k \leq \ell}
					\left( \left(1-\sum\limits_{\substack{1 \leq j < s_k \\ j \neq s_1,\ldots,s_{k-1}}}a_{kj}\right)x_{s_k} \right)=1$.
				\end{itemize}
				Hence by Lemma \ref{facet}, $\Delta'$ is reflexive, and so $\Delta$ is Gorenstein of index $r$.
\end{proof}

\begin{Remark}
	\label{rmpower}
Let $\Delta$ be the Gorenstein simplex as in Theorem \ref{power}.
Then the vertices of the associated dual reflexive simplex of $\Delta$ are following lattice points:
	\begin{itemize}
		\item $-e_{s_k}$, for $k=1,\ldots,\ell$;
		\item $-pe_i+\sum\limits_{j=k+1}^{\ell}a_{ji}e_{s_j}$, for $s_k < i < s_{k+1}$;
		\item $p\sum\limits_{\substack{1 \leq j < s_{\ell} \\ j \neq s_1,\ldots,s_{\ell-1}}}e_j+\sum\limits_{1\leq k \leq \ell}
		\left( \left(1-\sum\limits_{\substack{1 \leq j < s_k \\ j \neq s_1,\ldots,s_{k-1}}}a_{kj}\right)e_{s_k} \right)$.
	\end{itemize}
\end{Remark}

\section{volume of the associated dual reflexive simplex}
In this section, we compute the volume of the associated dual reflexive simplices of the Gorenstein simplices we constructed in Sections 2 and 3.
 We first consider the case of Gorenstein simplices arising from Hermite normal form matrices with one nonstandard row.
\begin{Theorem}
	\label{dual}
	Let $\Delta(A) \subset \RR^d$ be a $d$-dimensional Gorenstein simplex of index $r$ as in Theorem \ref{one} and set $\Delta=r\Delta(A)-(1,\ldots,1)$.
	For $0 \leq i \leq d-1$, we set $b_i=a_d/a_i$.
	Then  we have $\textnormal{Vol}(\Delta^\vee)=r\prod_{j=0}^{d-1}b_i$. 
\end{Theorem}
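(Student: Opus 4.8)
The plan is to read the vertices of $\Delta^\vee$ off from Lemma \ref{ver} and compute the normalized volume directly as the absolute value of the determinant of the $d \times d$ matrix $M$ whose rows are $w_i - w_0$ for $1 \leq i \leq d$, since the normalized volume of a $d$-simplex with vertices $w_0,\ldots,w_d$ equals $|\det M|$.

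First I would record and simplify the vertex differences. By Lemma \ref{ver} the vertices of $\Delta^\vee$ are $w_0 = -e_d$, together with $w_i = -\frac{a_d}{a_i}e_i + \frac{a_d-a_i}{a_i}e_d$ for $1 \leq i \leq d-1$ and $w_d = \frac{a_d}{a_0}\sum_{j=1}^{d-1}e_j + \frac{(r-d+1)a_d - a_0}{a_0}e_d$. Subtracting $w_0 = -e_d$ and using $\frac{a_d-a_i}{a_i}+1 = \frac{a_d}{a_i}$ and $\frac{(r-d+1)a_d-a_0}{a_0}+1 = \frac{(r-d+1)a_d}{a_0}$, the differences collapse to
$$w_i - w_0 = \frac{a_d}{a_i}(e_d - e_i) \quad (1 \leq i \leq d-1), \qquad w_d - w_0 = \frac{a_d}{a_0}\left(\sum_{j=1}^{d-1}e_j + (r-d+1)e_d\right).$$

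The key observation is that each of the $d$ rows of $M$ carries a scalar factor $a_d/a_i$ (with $i=0$ for the last row), so that $\det M = \left(\prod_{i=0}^{d-1} b_i\right)\det M'$, where $M'$ is the integer matrix with rows $e_d - e_i$ $(1 \leq i \leq d-1)$ and $\sum_{j=1}^{d-1}e_j + (r-d+1)e_d$. To evaluate $\det M'$ I would add all of columns $1,\ldots,d-1$ to the last column: in each of the first $d-1$ rows the last entry becomes $1+(-1)=0$, while in the last row it becomes $(r-d+1)+(d-1)=r$. Expanding the resulting matrix along its last column (whose only nonzero entry is $r$) leaves the $(d-1)\times(d-1)$ diagonal matrix $-I$, so $\det M' = \pm r$. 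Hence $|\det M| = r\prod_{i=0}^{d-1} b_i$, which is exactly $\textnormal{Vol}(\Delta^\vee)$.

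The only real obstacle is the determinant bookkeeping: spotting that every row factors cleanly as $\frac{a_d}{a_i}$ times an integer vector — which tacitly uses the Gorenstein relation $ra_d = a_0+\cdots+a_{d-1}+1$, already built into the shape of $w_d$ in Lemma \ref{ver} — and choosing the single column operation that absorbs the last column so that the factor $r$ emerges. As a sanity check, $b_i = a_d/a_i$ is an integer because $a_i \mid a_d$ (Theorem \ref{one}), so the answer is a positive integer, consistent with $\Delta^\vee$ being a lattice simplex. Everything else is routine.
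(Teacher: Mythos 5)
Your proof is correct and follows essentially the same route as the paper: both read the vertices of $\Delta^\vee$ off Lemma \ref{ver}, translate by $-w_0$, and reduce to a determinant computation yielding $r\prod_{i=0}^{d-1}b_i$ (the paper phrases the reduction as passing to a unimodularly equivalent simplex with vertices $0$, $-b_ie_i$, and $b_0\sum_{j=1}^{d-1}e_j+rb_0e_d$, which is the same elementary row/column manipulation you perform explicitly). Your column-operation evaluation of $\det M'$ is a valid and slightly more detailed version of the step the paper leaves as ``easy to show.''
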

\begin{proof}
	 By Lemma \ref{ver}, we know that $\Delta^\vee=\text{conv}(w_0,\ldots,w_d)$, where
	 \begin{displaymath}
	 w_i=\left\{
	 \begin{aligned}
	 &-e_d,& \ \textnormal{if}& \ i=0,\\
	 &-\dfrac{a_d}{a_i}e_i+\dfrac{a_d-a_i}{a_i}e_d,&\ \textnormal{if} &\  1 \leq i \leq d-1, \\
	 &\dfrac{a_d}{a_0}\sum\limits_{j=1}^{d-1}e_j+\dfrac{(r-d+1)a_d-a_0}{a_0}e_d, &\ \textnormal{if} &\  i=d.
	 \end{aligned}
	 \right.
	 \end{displaymath} 
	 It is easy show $\Delta^\vee$ is unimodularly equivalent to a $d$-dimensional simplex $\Delta'$ whose vertices $v_0',\ldots,v_d'$ are the following:
	  \begin{displaymath}
	  v_i'=\left\{
	  \begin{aligned}
	  &(0,\ldots,0),& \ \textnormal{if}& \ i=0,\\
	  &-b_ie_i,&\ \textnormal{if} &\  1 \leq i \leq d-1, \\
	  &b_0\sum\limits_{j=1}^{d-1}e_j+rb_0e_d,&\ \textnormal{if} &\  i = d. \\
	  \end{aligned}
	  \right.
	  \end{displaymath} 
	  Hence we have $\textnormal{Vol}(\Delta^\vee)=r\prod_{j=0}^{d-1}b_i$, as desired.
	\end{proof}
	From this theorem, we immediately obtain the following corollary.
	\begin{Corollary}
		\label{pv}
		Let $\Delta \subset \RR^d$ be a $d$-dimensional Gorenstein simplex of index $r$ whose normalized volume equals a prime number $p$.
		Suppose that $\Delta$ is not a lattice pyramid over any lower-dimensional lattice simplex and the unique interior lattice point of $r\Delta$ is the origin of $\RR^d$.
		Then we have $\textnormal{Vol}((r\Delta)^\vee)=rp^d$.
	\end{Corollary}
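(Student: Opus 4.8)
The plan is to combine the structural characterization of Theorem \ref{p} (via Theorem \ref{one}) with the volume formula of Theorem \ref{dual}, so that the corollary becomes a one-line specialization once the relevant $a_i$ are pinned down. First I would invoke the reduction used in the proof of Theorem \ref{p}: since $\textnormal{Vol}(\Delta)=p$ is prime and $\Delta$ is not a lattice pyramid over any lower-dimensional lattice simplex, $\Delta$ is unimodularly equivalent to $\Delta(A)$ for a sequence $A=(a_1,\ldots,a_{d-1},p)$ with $1\le a_1,\ldots,a_{d-1}<p$ (and a corresponding $a_0$ with $1\le a_0<p$). Because $\Delta$ is Gorenstein of index $r$, Theorem \ref{one} forces $a_i\mid a_d=p$ for every $0\le i\le d-1$; as $1\le a_i<p$ and $p$ is prime, this yields $a_i=1$ for all $i$, together with $rp=a_0+\cdots+a_{d-1}+1=d+1$.

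Next I would match the object $(r\Delta)^\vee$ in the statement with the dual appearing in Theorem \ref{dual}. The hypothesis that the origin is the \emph{unique} interior lattice point of $r\Delta$ means that $r\Delta$ is already reflexive, with no translation needed. Under the unimodular equivalence $\Delta\cong\Delta(A)$ from the previous step (which commutes with the dilation by $r$), $r\Delta$ is unimodularly equivalent to the reflexive simplex $r\Delta(A)-(1,\ldots,1)$ that Theorem \ref{dual} analyzes, since both are reflexive with the origin as their unique interior lattice point. A unimodular equivalence between two reflexive polytopes induces a unimodular equivalence between their dual polytopes and hence preserves normalized volume, so $\textnormal{Vol}((r\Delta)^\vee)=\textnormal{Vol}\bigl((r\Delta(A)-(1,\ldots,1))^\vee\bigr)$.

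Finally I would substitute into Theorem \ref{dual}. With $a_d=p$ and $a_0=\cdots=a_{d-1}=1$, the quantities there become $b_i=a_d/a_i=p$ for each $0\le i\le d-1$, whence $\textnormal{Vol}((r\Delta)^\vee)=r\prod_{i=0}^{d-1}b_i=r\,p^{d}$, as claimed. The computation itself is immediate; the only step requiring genuine care is the identification in the second paragraph, namely verifying that the origin-interior-point normalization makes $r\Delta$ coincide (up to unimodular equivalence) with the explicit translate $r\Delta(A)-(1,\ldots,1)$ used in Theorem \ref{dual}, and that passing to duals therefore preserves the normalized volume. I expect this bookkeeping to be the main, albeit routine, obstacle.
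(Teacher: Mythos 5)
Your proposal is correct and follows exactly the route the paper intends: the paper derives Corollary \ref{pv} as an immediate specialization of Theorem \ref{dual} (with the structure of $\Delta$ pinned down by Theorems \ref{p} and \ref{one} forcing $a_0=\cdots=a_{d-1}=1$ and $a_d=p$, hence $b_i=p$ and $\textnormal{Vol}((r\Delta)^\vee)=rp^d$). Your extra care in checking that the origin-normalization identifies $r\Delta$, up to a linear unimodular map, with $r\Delta(A)-(1,\ldots,1)$ is precisely the bookkeeping the paper leaves implicit.
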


Next, we consider the case of Gorenstein simplices with normalized volume $p^2$, where $p$ is a prime number.
By Theorem \ref{dual}, we can compute the volume of the associated dual reflexive simplices of the Gorenstein simplices in Theorem \ref{pp} (1).

The Gorenstein simplices in Theorem \ref{pp} (2) are included in the Gorenstein simplices in Theorem \ref{power}.
Hence, we consider the case of the Gorenstein simplices in Theorem \ref{power}.
In fact, we can obtain the following Theorem.

\begin{Theorem}
	Let $\Delta \subset \RR^d$ be a $d$-dimensional Gorenstein polytope of index $r$ as in Theorem \ref{power}
	such that the unique interior lattice point of $r\Delta$ is the origin in $\RR^d$.
	Then we have $\textnormal{Vol}((r\Delta)^{\vee})=rp^{d-\ell+1}$.
\end{Theorem}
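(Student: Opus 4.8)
The plan is to read off the vertices of $(r\Delta)^\vee$ from Remark \ref{rmpower} and evaluate the normalized volume as the absolute value of the determinant of the $(d+1)\times(d+1)$ matrix $N$ whose rows are the homogenized vertices $(1,w)$, one per vertex $w$. Under the standing hypothesis the interior point of $r\Delta$ is the origin, so $r\Delta$ is itself the reflexive representative and $(r\Delta)^\vee$ is exactly the associated dual reflexive simplex of Remark \ref{rmpower}. Throughout I separate the coordinate indices into the \emph{special} ones $s_1,\ldots,s_\ell$ and the remaining \emph{ordinary} ones; for an ordinary index $i$ with $s_k<i<s_{k+1}$ I write $k(i)=k$. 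The three families of vertices then become the rows $R^w_k=(1;-e_{s_k})$, the rows $R^u_i=(1;-pe_i+\sum_{j>k(i)}a_{ji}e_{s_j})$, and the single row $R^z$ coming from the last vertex.

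First I would use the $\ell$ rows $R^w_k=(1;-e_{s_k})$ to clear every entry lying in a special column $s_k$: adding $a_{ji}R^w_j$ to $R^u_i$ for each $j>k(i)$ kills the $a_{ji}$ entries, and adding $(1-\sigma_k)R^w_k$ to $R^z$ kills its special-column entries, where $\sigma_k=\sum_{1\le j<s_k,\,j\neq s_1,\ldots,s_{k-1}}a_{kj}$. These are determinant-preserving row operations, and afterwards the special columns carry nonzero entries only in the rows $R^w_k$, where they form the block $-I_\ell$. A Laplace expansion along the $\ell$ special columns therefore reduces $|\det N|$ to the absolute value of a $(d-\ell+1)\times(d-\ell+1)$ determinant $M'$, whose columns are the homogenizing column together with the ordinary columns and whose rows are the cleared $R^u_i$ together with the cleared $R^z$.

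The matrix $M'$ has a transparent shape: each cleared $R^u_i$ reads $(\alpha_i;-p\,e_i)$ with homogenizing entry $\alpha_i=1+\sum_{j>k(i)}a_{ji}$, while the cleared $R^z$ reads $(\beta;p,\ldots,p)$ with $\beta=1+\ell-\sum_{k}\sigma_k$. I would compute $\det M'$ by first adding $\tfrac1p$ times each ordinary column into the homogenizing column and then adding the $R^u$ rows into the $R^z$ row to annihilate the remaining ordinary entries of the last row; this triangularizes $M'$ and gives $|\det M'|=|\gamma|\,p^{d-\ell}$ with $\gamma=\beta+\sum_i\alpha_i$. The decisive step is a double-counting identity: reindexing shows $\sum_k\sigma_k=\sum_{i\,\textnormal{ordinary}}\sum_{j>k(i)}a_{ji}$, so all the $a_{ji}$ cancel in $\gamma$ and, writing $T$ for this common sum, one is left with $\gamma=(1+\ell-T)+\big((d-\ell)+T\big)=d+1$. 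Invoking the hypothesis $d=rp-1$ gives $\gamma=rp$, whence $\textnormal{Vol}((r\Delta)^\vee)=|\det N|=|\det(-I_\ell)|\cdot|\det M'|=rp\cdot p^{d-\ell}=rp^{d-\ell+1}$.

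The main obstacle is precisely the bookkeeping behind this cancellation: one must verify that the condition $j>k(i)$ appearing in the $R^u$ rows matches the condition $m<s_k$ appearing in the definition of $\sigma_k$, so that the two double sums over pairs (ordinary index, special index) coincide termwise. Everything else is routine linear algebra. As a consistency check, the case $\ell=1$ yields $rp^{d}$, in agreement with Corollary \ref{pv}; and since $d=rp-1$ is assumed throughout, the answer depends on the $a_{ij}$ only through the data $(d,\ell,p,r)$, as it must.
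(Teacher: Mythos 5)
Your proposal is correct and follows essentially the same route as the paper: both read off the vertices of $(r\Delta)^{\vee}$ from Remark \ref{rmpower} and evaluate the normalized volume by an elementary determinant computation, using $d=rp-1$ at the final step. The only difference is organizational — the paper first applies an explicit unimodular map (built from the interior point $t'$) to put one vertex at the origin and triangularize, whereas you row-reduce the homogenized $(d+1)\times(d+1)$ vertex matrix directly and make the key cancellation $\gamma=d+1=rp$ explicit.
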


\begin{proof}
	By Remark \ref{rmpower}, 
	$(r\Delta)^\vee$ is the convex hull of the following lattice points:
	\begin{itemize}
		\item $-e_{s_k}$, for $k=1,\ldots,\ell$;
		\item $-pe_i+\sum\limits_{j=k+1}^{\ell}a_{ji}e_{s_j}$, for $s_k < i < s_{k+1}$;
		\item $p\sum\limits_{\substack{1 \leq j < s_{\ell} \\ j \neq s_1,\ldots,s_{\ell-1}}}e_j+\sum\limits_{1\leq k \leq \ell}
		\left( \left(1-\sum\limits_{\substack{1 \leq j < s_k \\ j \neq s_1,\ldots,s_{k-1}}}a_{kj}\right)e_{s_k} \right)$.
	\end{itemize}
	So we set
	\begin{displaymath}
	U=\left(
	\begin{array}{ccccc}
	1 & 0 & \cdots & 0& t'_1\\
	\vdots & \vdots & & \vdots&\vdots\\
	0 & 0 & \cdots & 1 & t'_{d-1}\\
	0 & 0 & \cdots & 0& t'_d
	\end{array}
	\right) \in \ZZ^{d \times d}.
	\end{displaymath} 
	Since $t'_d=t_{\ell}=1$, it follows that $U$ is unimodular.
	Letting $\Delta'=f_U((r\Delta)^\vee+e_d)$,
	we know that $\Delta'$ is the convex hull of the following lattice points:
	\begin{itemize}
		\item $(0,\ldots,0)$;
		\item $-e_{s_k}$, for $k=1,\ldots,\ell-1$;
		\item $-pe_i+\sum\limits_{j=k+1}^{\ell-1}a_{ji}e_{s_j}$, for $s_k < i < s_{k+1}$;
		\item $p\sum\limits_{\substack{1 \leq j < s_{\ell} \\ j \neq s_1,\ldots,s_{\ell-1}}}e_j+\sum\limits_{1\leq k \leq \ell-1}
		\left( \left(1-\sum\limits_{\substack{1 \leq j < s_k \\ j \neq s_1,\ldots,s_{k-1}}}a_{kj}\right)e_{s_k} \right)+rpe_d$.
	\end{itemize}
	Hence we have $\textnormal{Vol}(\Delta')=rp^{d-\ell+1}$, as desired.
\end{proof}
\begin{Corollary}
		Let $p$ be a prime number, and 
	let $\Delta \subset \RR^d$ be a $d$-dimensional Gorenstein simplex of index $r$ whose normalized volume equals $p^2$.\\
	(1) Suppose that $\Delta$ and $s$ satisfy the condition of Theorem \ref{pp} (1) and the unique interior lattice point of $r\Delta$ is the origin in $\RR^d$.
	Then  we have $\textnormal{Vol}((r\Delta)^\vee)=rp^{2d-s}$.\\
	(2) Suppose that  $\Delta$ satisfies the condition of Theorem \ref{pp} (2) and the unique interior lattice point of $r\Delta$ is the origin in $\RR^d$.
		Then  we have $\textnormal{Vol}((r\Delta)^\vee)=rp^{d-1}$.
\end{Corollary}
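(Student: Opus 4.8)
The plan is to reduce each part to a volume formula that has already been proved, so that only a short computation or a substitution remains. For part (1), I would first observe that the hypothesis of Theorem \ref{pp} (1) places $\Delta$, up to ordering of the vertices and unimodular equivalence, among the simplices $\Delta(A)$ coming from a Hermite normal form matrix with one nonstandard row, with $a_d=p^2$. By Lemma \ref{onelem} the prescribed generator of $\Lambda_\Delta$ must equal $\left(\frac{a_0}{p^2},\ldots,\frac{a_{d-1}}{p^2},\frac{1}{p^2}\right)$, and comparing this with $\left(\frac1p,\ldots,\frac1p,\frac1{p^2},\ldots,\frac1{p^2}\right)$ entry by entry shows that exactly $s$ of $a_0,\ldots,a_{d-1}$ equal $p$ while the remaining $d-s$ equal $1$; one checks along the way that the conditions $a_i\mid a_d$ and $ra_d=a_0+\cdots+a_{d-1}+1$ of Theorem \ref{one} hold, the latter recovering the relation $rp^2-1=(d-s)+ps$.

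Since the origin is taken to be the unique interior lattice point of $r\Delta$, this $r\Delta$ is unimodularly equivalent to the reflexive simplex $r\Delta(A)-(1,\ldots,1)$ appearing in Theorem \ref{dual}, so their duals have the same normalized volume and I may apply that theorem directly. It then remains only to evaluate $r\prod_{j=0}^{d-1}b_j$ with $b_j=p^2/a_j$: the $s$ indices with $a_j=p$ contribute $b_j=p$ and the $d-s$ indices with $a_j=1$ contribute $b_j=p^2$, so
$$\prod_{j=0}^{d-1}b_j=p^{s}(p^2)^{d-s}=p^{2d-s},$$
giving $\textnormal{Vol}((r\Delta)^\vee)=rp^{2d-s}$, as claimed.

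For part (2), I would invoke the identification already recorded in the text, namely that the Gorenstein simplices of Theorem \ref{pp} (2) are precisely the $\ell=2$ members of the family of Theorem \ref{power}, arising from Hermite normal form matrices with two nonstandard rows. The immediately preceding theorem computes $\textnormal{Vol}((r\Delta)^\vee)=rp^{d-\ell+1}$ for such simplices positioned with the origin as the interior lattice point of $r\Delta$; substituting $\ell=2$ yields $\textnormal{Vol}((r\Delta)^\vee)=rp^{d-1}$. The only genuinely delicate step is the bookkeeping in part (1): one must read off the multiset $\{a_0,\ldots,a_{d-1}\}$ correctly from the generator of $\Lambda_\Delta$ and confirm that the lattice translate realizing the origin as the interior point of $r\Delta$ coincides with the one used in Theorem \ref{dual}, so that the two dual volumes literally agree. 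With these identifications in place, both parts follow by direct substitution into formulas already established.
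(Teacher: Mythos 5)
Your proposal is correct and follows the same route as the paper: part (1) is obtained by reading off from the generator of $\Lambda_\Delta$ that $s$ of the $a_j$ equal $p$ and $d-s$ equal $1$, then substituting into the formula $\textnormal{Vol}(\Delta^\vee)=r\prod_{j=0}^{d-1}(a_d/a_j)$ of Theorem \ref{dual}; part (2) is the case $\ell=2$ of the volume formula $rp^{d-\ell+1}$ for the simplices of Theorem \ref{power}, exactly as the paper indicates.
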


Finally, we consider the case of Gorenstein simplices whose normalized volume equals $pq$, where $p$ and $q$ are prime numbers with $p \neq q$.
\begin{Theorem}
	\label{pqvol}
	Let $p$ and $q$ be prime integers with $p \neq q$ and 
	$\Delta \subset \RR^d$ a $d$-dimensional Gorenstein simplex of index $r$ whose normalized volume equals $pq$.
	Suppose that $\Delta$ is not a lattice pyramid over any lower-dimensional lattice simplex and the unique interior lattice point of $r\Delta$ is the origin in $\RR^d$.
	Then we have $\textnormal{Vol}((r\Delta)^\vee)=
	rp^{s_1+s_3-1}q^{s_2+s_3-1}$, where
		$s_1,s_2,s_3$ are nonnegative integers which satisfy the conditions of Theorem \ref{pq}.
\end{Theorem}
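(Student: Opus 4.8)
The plan is to combine Theorem \ref{pq}, which pins down the group $\Lambda_\Delta$ and hence the Hermite normal form of $\Delta$, with the explicit descriptions of the dual vertices already obtained in Section 2 and in Lemma \ref{pqver}. By Theorem \ref{pq} there are nonnegative integers $s_1,s_2,s_3$ with $s_1+s_2+s_3=d+1$ and $rpq=s_1q+s_2p+s_3$ such that, after a reordering of the vertices, $\Lambda_\Delta$ is generated by the displayed element. Since the origin is assumed to be the unique interior lattice point of $r\Delta$, the simplex $r\Delta$ is reflexive and $(r\Delta)^\vee$ is exactly the associated dual reflexive simplex, whose normalized volume is the absolute value of the determinant of the matrix whose rows are the differences $w_j-w_0$ of its vertices. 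I would split the argument according to whether $s_3\geq 1$ or $s_3=0$, which is precisely the dichotomy between Hermite normal forms with one and with two nonstandard rows used in the proof of Theorem \ref{pq}.

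When $s_3\geq 1$ the simplex comes from a single nonstandard row, so $\Delta$ is unimodularly equivalent to some $\Delta(A)$ with $A=(a_1,\ldots,a_{d-1},pq)$, and by Theorem \ref{one} each $a_i$ (together with the auxiliary $a_0$ of Lemma \ref{onelem}) is a divisor of $pq$ with $1\leq a_i<pq$, hence $a_i\in\{1,p,q\}$. Matching $a_i/pq$ against the entry types $1/p,1/q,1/pq$ of the generator shows that among $a_0,\ldots,a_{d-1}$ exactly $s_1$ equal $q$, exactly $s_2$ equal $p$, and exactly $s_3-1$ equal $1$ (the remaining $1/pq$ being the final coordinate). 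Theorem \ref{dual} then gives $\textnormal{Vol}((r\Delta)^\vee)=r\prod_{i=0}^{d-1}(pq/a_i)$, and since $pq/a_i$ equals $p$, $q$ or $pq$ according as $a_i$ equals $q$, $p$ or $1$, the product collapses to $p^{s_1}q^{s_2}(pq)^{s_3-1}=p^{s_1+s_3-1}q^{s_2+s_3-1}$, as required.

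When $s_3=0$ we are in the two nonstandard row case with $a_s=p$, $b_d=q$, and by Lemma \ref{pqver} the vertices $w_0,\ldots,w_d$ of $(r\Delta)^\vee$ are given explicitly, with $k=rpq-p(d-s)-qs\in\{p,q\}$. I would translate $w_0=-e_d$ to the origin and compute $\det M$, where the rows of $M$ are the $w_i-w_0$. The $(d-2)\times(d-2)$ block indexed in both rows and columns by $\{1,\ldots,d\}\setminus\{s,d\}$ is exactly diagonal, equal to $-D$ with $D=\textnormal{diag}(p,\ldots,p,q,\ldots,q)$ ($s-1$ copies of $p$ and $d-1-s$ copies of $q$), so I would evaluate $\det M$ by the Schur complement with respect to it: the diagonal block contributes $(-1)^{d-2}p^{s-1}q^{d-1-s}$, and the residual $2\times2$ Schur block, after substituting the values of $c_s$ and $c_d$ from Lemma \ref{pqver}, simplifies to a matrix of determinant $-rpq/k$. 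Here the defining relation $k=rpq-p(d-s)-qs$ is exactly what forces the entries to collapse (in particular $c_s+\tfrac{q(p-1)(s-1)}{k}=\tfrac{q}{k}$, and the $(d,d)$ entry reduces to $\tfrac{q(rp-1)}{k}$). This yields $\textnormal{Vol}((r\Delta)^\vee)=p^{s-1}q^{d-1-s}\cdot rpq/k=rp^{s}q^{d-s}/k$. Finally, reading off the first coordinate of the generator in the proof of Theorem \ref{pq}, the value $\tfrac{k}{pq}$ is $\tfrac1q$ or $\tfrac1p$ according as $k=p$ or $k=q$, so $k=p$ forces $(s_1,s_2,s_3)=(s,d-s+1,0)$ while $k=q$ forces $(s_1,s_2,s_3)=(s+1,d-s,0)$; dividing $rp^sq^{d-s}$ by $k$ then gives $rp^{s_1+s_3-1}q^{s_2+s_3-1}$ in either subcase.

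The main obstacle is the determinant evaluation in the two nonstandard row case: the Schur complement bookkeeping is delicate because the last row $w_d-w_0$ is dense and the two special coordinates $s$ and $d$ interact through $w_d$, so the payoff hinges on the algebraic identities coming from the reflexivity relation $k=rpq-p(d-s)-qs$. By contrast, the one nonstandard row case is routine once Theorem \ref{dual} is invoked, the only bookkeeping there being the translation of the three entry-types $1/p,1/q,1/pq$ of the generator into the three divisor-types $q,p,1$ of the $a_i$.
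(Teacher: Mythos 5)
Your proposal is correct and follows the paper's proof in its essentials: the same case split on $s_3\geq 1$ versus $s_3=0$, with Theorem \ref{dual} handling the first case and the explicit dual vertices from Lemma \ref{pqver} handling the second. The only divergence is computational: where you evaluate $\det(w_i-w_0)$ by a Schur complement against the diagonal block (and your identities $c_s+q(s-1)(p-1)/k=q/k$ and the reduction of the last diagonal entry to $q(rp-1)/k$ do check out, giving $rp^sq^{d-s}/k$), the paper instead applies a unimodular transformation bringing $(r\Delta)^\vee$ to a near-triangular simplex whose volume is read off directly as $rp^{s_1-1}q^{s_2-1}$ --- the two computations agree after your translation between $(s,k)$ and $(s_1,s_2)$.
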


\begin{proof}
	First, assume that $s_3 \geq 1$.
	Then by Theorem \ref{dual}, we obtain $\textnormal{Vol}((r\Delta)^\vee)=
	rp^{s_1+s_3-1}q^{s_2+s_3-1}$.
	
	Next,  assume that $s_3=0$.
		Then by the condition (1) of Theorem \ref{pq}, we know that $(s_1,s_2) \neq (1,d)$ and $(s_1,s_2) \neq (d,1)$.
		Moreover, by the condition (2) of Theorem \ref{pq} and the normalized volume of $\Delta$, we have $(s_1,s_2) \neq (d+1,0)$ and $(s_1,s_2) \neq (0,d+1)$.
	Hence, we have $s_1,s_2 \geq 2$. 
	Since $\Lambda_{\Delta}$ is generated by 
		$\left(
		\underbrace{\dfrac{1}{p},\ldots,\dfrac{1}{p}}_{s_1},\underbrace{\dfrac{1}{q},\ldots,\dfrac{1}{q}}_{s_2}\right)$,
		we may assume that 
		$r\Delta=r\Delta(A,B)-(1,\ldots,1)$,
		where $A=(\underbrace{p-1,\ldots,p-1}_{s_1-1},p)$ and $B=(\underbrace{0,\ldots,0}_{s_1},\underbrace{q-1,\ldots,q-1}_{s_2-2},q)$.
		Then by Lemma \ref{pqver}, we know that $(r\Delta)^\vee=\text{conv}(w_0,\ldots,w_d)$ where
				 \begin{displaymath}
				 w_i=\left\{
				 \begin{aligned}
				 &-e_d,& \ \textnormal{if}& \ i=0,\\
				 &-pe_i+(p-1)e_{s_1},&\ \textnormal{if} &\  1 \leq i \leq s_1-1, \\
				 &-e_i,& \ \textnormal{if}& \ i=s_1,\\
				 &-qe_i+(q-1)e_d,&\ \textnormal{if} &\  s_1+1 \leq i \leq d-1, \\
				 &(c_1,\ldots,c_d), &\ \textnormal{if} &\  i=d,
				 \end{aligned}
				 \right.
				 \end{displaymath} 
				 and
			 \begin{displaymath}
				 c_i=\left\{
				 \begin{aligned}
				 &\dfrac{q(1-(s_1-1)(p-1))}{p},& \ \textnormal{if}& \ i=s_1,\\
				 &1-(d-s_1-1)(q-1),& \ \textnormal{if}& \ i=d,\\
				 &q,& \textnormal{o}& \textnormal{therwise}.\\
				 \end{aligned}
				 \right.
				 \end{displaymath} 
			  It is easy show $(r\Delta)^\vee$ is unimodularly equivalent to a $d$-dimensional simplex $\Delta'$ whose vertices $v_0',\ldots,v_d'$ are the following:
			  \begin{displaymath}
			  v_i'=\left\{
			  \begin{aligned}
			  &(0,\ldots,0),& \ \textnormal{if}& \ i=0,\\
			  &-(p-1)e_i+pe_{s_1},& \ \textnormal{if}& \ 1 \leq i \leq s_1-1,\\
			  &-e_i,& \ \textnormal{if}& \ i=s_1,\\
			  &-qe_i,& \ \textnormal{if}& \ s_1+1 \leq i \leq d-1,\\
			  &(c_{1},\ldots,c_{d-1},c_1+\cdots +c_d+1),&\ \textnormal{if} &\  i=d, \\
			  \end{aligned}
			  \right.
			  \end{displaymath} 
			  Since $c_1+\cdots+c_d+1=rq$,
			     we have that $\textnormal{Vol}((r\Delta)^\vee)=rp^{s_1-1}q^{s_2-1}$,
			     as desired.
	\end{proof}


\begin{thebibliography}{99}
		\bibitem{Batyrev}
		V. Batyrev,
		Dual polyhedra and mirror symmetry for Calabi-Yau hypersurfaces in toric varieties,
		\textit{J. \ Algebraic \ Geom. \ }\textbf{3}(1994), 493--535.
					
	\bibitem{BH}
	V.V. Batyrev and J. Hofscheier,
	Lattice polytopes, finite abelian subgroups in $\textnormal{SL}(n, \CC)$ and
	coding theory, arXiv:1309.5312.
	
		\bibitem{BJ}
	V. Batyrev and D. Juny,
	Classification of Gorenstein toric del Pezzo varieties in arbitrary dimension, 
	\textit{Mosc. Math. J.}, \textbf{10}(2010), 285--316.
	
\bibitem{BDS}
B. Braun, R. Davis and L. Solus,
Detecting the Integer Decomposition Property and Ehrhart Unimodality in Reflexive Simplices,
arXiv:1608.01614.

	\bibitem{DeNegriHibi} E.~De~Negri and T.~Hibi. Gorenstein algebras of Veronese type. \emph{J. Algebra}, {\bf 193}(1997), 629--639.

	\bibitem{HibiRedBook} T.~Hibi, 
	``Algebraic Combinatorics on Convex Polytopes,''
	Carslaw Publications, Glebe, N.S.W., Australia, 1992.
	
	\bibitem{HHL}
	T. Hibi, A. Higashitani and N. Li,
	Hermite normal forms of $\delta$-vectors,
	\textit{J. Combin. Theory Ser. A} \textbf{119}(2012), 1158--1173.
	
	\bibitem{HNT}
	A. Higashitani, B. Nill and A. Tsuchiya,
	Gorenstein polytopes with trinomial $h^{*}$-polynomials,
	arXiv:1503.05685.
	
	\bibitem{JK}
	M. Joswig and K. Kulas,
	Tropical and ordinary convexity combined,
	\textit{Adv. Geom.} \textbf{10}(2010), 335-352.
	
	
	\bibitem{Kre}
	M. Kreuzer and H. Skarke,
	Complete classification of reflexive polyhedra in four dimensions,
	\textit{Adv. Theor. Math. Phys.} \textbf{4}(2000), 1209--1230.
	
	\bibitem{Lag}
	J. C. Lagarias and G. M. Ziegler,
	Bounds for lattice polytopes containing a fixed number of interior points in a sublattice,
	\textit{Canad. J. Math.} \textbf{43}(1991), 1022--1035.
	
	\bibitem{Nill}
	B. Nill,
	 Lattice polytopes having $h^*$-polynomials with given degree and linear coefficient.
	\textit{European J. Combin.}, \textbf{29}, 1596--1602, 2008.
	
	
	\end{thebibliography}
		\end{document}